\documentclass{amsart}

\usepackage{amsmath,amssymb,mathtools}
\usepackage{booktabs}
\usepackage{enumitem}
\usepackage{centernot}
\usepackage[hidelinks]{hyperref}

\newcommand{\Z}{\mathbb Z}
\newcommand{\Aff}{\overrightarrow{\mathrm{GL}}}
\newcommand{\sign}{\operatorname{sign}}

\theoremstyle{plain}
\newtheorem{theorem}{Theorem}[section]
\newtheorem{proposition}[theorem]{Proposition}
\newtheorem{corollary}[theorem]{Corollary}
\newtheorem{lemma}[theorem]{Lemma}
\theoremstyle{remark}
\newtheorem{remark}[theorem]{Remark}

\title{The Wang--Sun Sum over Affine Derangements}
\author{Octavio A. Agust\'in-Aquino}
\address{Instituto de Física y Matemáticas\\Universidad Tecnológica de la Mixteca\\Huajuapan de León, Oaxaca, México}
\email{octavioalberto@mixteco.utm.mx}
\date{September 29, 2026}

\subjclass[2020]{11L03, 11A25, 20B35}
\keywords{derangements, affine group, roots of unity, cotangent power sums, Bernoulli polynomials}

\begin{document}

\begin{abstract}
Let $n$ be even, let $\zeta$ be a primitive $n$-th root of unity, and let
$\Delta_n$ be the set of derangements in the affine group
$\Aff(\Z/n\Z)$.  We evaluate the Wang--Sun type sum
\[
 S_n=\sum_{\pi\in\Delta_n}\sign(\pi)
 \prod_{x\in\Z/n\Z}
 \frac{1+\zeta^{x-\pi(x)}}{1-\zeta^{x-\pi(x)}}.
\]
The main step is an arithmetic grouping: for an affine map
$\pi(x)=vx+u$, the product depends only on
$\sigma=\gcd(1-v,n)$ and on the residue class of $u$ modulo $\sigma$.
After summing the translation signs, the full
sum becomes a sum over even divisors with explicit local multiplicities.  The only
remaining analytic blocks are alternating even cotangent power sums;
a recent formula of Liu and Xin then gives a closed expression in
Bernoulli polynomials and universal coefficients. Moreover, using
results by Cvijovi\'{c} and Klinowski we can evaluate the
cotangent power sums using exclusively rational arithmetic.  This supplies a
general formula for the full affine sum whose initial values were
computed using the definition only in an earlier work. The formula also lets us
prove some general patterns of the sum: it has integral value for powers of $2$,
prime denominator for $2p$ for primes $p>3$ and it has alternating signs.
\end{abstract}

\maketitle

\enlargethispage{2pt}

\section{Introduction}

Let $n>1$ be an odd integer, $D(n-1)$ be the set of all derangements of $\{1,\ldots,n-1\}$
and $\zeta$ a primitive $n$-th root of unity. Wang and Sun \cite{WangSun} proved the formula
\[
\sum_{\tau \in D(n-1)}\sign(\tau)\prod_{j=1}^{n-1}\frac{1+\zeta^{j-\tau(j)}}{1-\zeta^{j-\tau(j)}}=(-1)^{\frac{n-1}{2}}\frac{((n-2)!!)^{2}}{n}
\]
as a byproduct of their investigations of the determinants of certain trigonometric matrices. In
\cite{Agustin2023} the analogous sum over affine derangements
$G_{n}=\Aff(\Z/n\Z)$ was studied and a closed formula was obtained for the involutive
affine derangements. For the full set of affine derangements only
some values were calculated, with no conjectured closed formula.

The purpose of this note is to give such a formula and some of its consequences, with the following overall plan.
In Section \ref{Sec:Power} we examine how to represent the product of one term of the Wang--Sun sum as
the power of a cotangent term. Then, in Section \ref{Sec:Signs}, we determine the signs and the multiplicities of
their contributions using results by Lerch and Sita Ramaiah, respectively. This suffices for the calculations of Section \ref{Sec:CyclicandDihedral} of the Wang--Sun formula for the groups $\mathcal{T}_{n}$, the group of order $n$ of translations (which are transpositions in the musical context), and $\mathcal{D}_{n} = \{x\mapsto \pm x + u\}$, the dihedral group (which corresponds to inversions and transpositions in music). The full formula is presented in Section \ref{Sec:Formula}; this formula, together with a result
by Cvijovi\'{c} and Klinowski and Newton-Girard recursions examined in Section \ref{Sec:cotangent}, allows us to calculate it with rational numbers only. 
A closed version is given in Section \ref{Sec:Closed}, by a specialisation of a formula by Liu and Xin. Some
patterns evident from the calculations are proven in Section \ref{Sec:Patterns}, and then the calculations of \cite{Agustin2023}
are extended (as a means of providing concrete examples) up to $n=24$ in Section \ref{Sec:Computations}. Some final remarks are stated in Section \ref{Sec:Final}.

Throughout this article we use the notation of \cite{GKP}: $a\backslash b$ means $a$ divides $b$,
$a\perp b$ means $a$ and $b$ are coprime and $[P]$ is the Iverson bracket, which evaluates to $1$ when $P$ is true and $0$ otherwise.

\section{Affine derangements and power representation}\label{Sec:Power}

Throughout, $n\ge 2$ is even and $\zeta=\exp(2\pi i/n)$.  Put
\[
 G_n=\Aff(\Z/n\Z)
 =\{e^u.v:x\mapsto vx+u:\ u\in\Z/n\Z,\ v\in(\Z/n\Z)^\times\}.
\]

Let $\Delta_n\subseteq G_n$ denote the set of derangements.  We examine
\begin{equation}\label{eq:Sn}
 S_n=\sum_{\pi\in\Delta_n}\sign(\pi)
 \prod_{x=0}^{n-1}
 \frac{1+\zeta^{x-\pi(x)}}{1-\zeta^{x-\pi(x)}}.
\end{equation}

For a subset $A\subset G_{n}$, we define
\[
 S_{n}(A) = \sum_{\pi\in A\cap \Delta_{n}}\sign(\pi)
 \prod_{x=0}^{n-1}
 \frac{1+\zeta^{x-\pi(x)}}{1-\zeta^{x-\pi(x)}};
\]
in particular, $S_{n} = S_{n}(G_{n})$.

It is convenient to remark here that $S_{n}$ does not depend on the choice of the primitive root.
Indeed, suppose $\zeta' = \zeta^{a}$ with $a\perp n$ and define $h_{a}(x)=ax$. Then for a derangement $\pi=e^{u}.v
\in\Delta_{n}$ we have $\rho = h_{a}\circ \pi \circ h_{a}^{-1}$, and more explicitly
\[
\rho(x) = vx+au,
\]
which means that the effect of $h_{a}$ is a permutation of the translation parameters. Writing $y=ax$, we have
\[
 y-\rho(y) = y-vy-au=ax-vax-au = a(x-\pi(x))
\]
and therefore
\[
\prod_{x\in\Z/n\Z}\frac{1+(\zeta^{a})^{x-\pi(x)}}{1-(\zeta^{a})^{x-\pi(x)}} = \prod_{y\in \Z/n\Z}\frac{1+\zeta^{y-\rho(y)}}{1-\zeta^{y-\rho(y)}}.
\]

Since conjugation by $h_{a}$ permutes $\Delta_{n}$, preserves permutation signs, and leaves the linear parts invariant, summing this identity over $\pi\in\Delta_{n}$ proves the invariance. More generally, $S_{n}(A)$ is also independent of the choice of the primitive root whenever $A$ is closed under composition with translations.

Fix $\pi=e^u.v$. Set
\[
 \sigma=\gcd(1-v,n)=\gcd(v-1,n)\quad \text{and}\quad N=\frac {n}{\sigma},
\]
so $\sigma$ is even and $\pi$ has a fixed point if, and only if,
\[
 (1-v)x\equiv u\pmod n,
\]
which means that $\pi$ is a derangement if and only if $\sigma\centernot\backslash u$. Observe that the exponents of the
roots of unity in \eqref{eq:Sn} are
\[
 x-\pi(x) = x-(vx+u) = (1-v)x-u
\]
so we can find a relatively simple form of each summand, depending on $(1-v)x-u\equiv -u \equiv r \pmod{\sigma}$.

\begin{lemma}[Power representation]\label{lem:compression}
Choose $r$ with $r\equiv-u\pmod \sigma$.  Then
\begin{equation}\label{eq:compressed}
 \prod_{x=0}^{n-1}
 \frac{1+\zeta^{x-\pi(x)}}{1-\zeta^{x-\pi(x)}}
 =\Psi_{n,\sigma}(r)^\sigma,
\end{equation}
where, for a derangement,
\begin{equation}\label{eq:Psi}
 \Psi_{n,\sigma}(r)=
 \begin{cases}
 1,&N\text{ even},\\[4pt]
 \displaystyle i\cot\left(\frac{\pi r}{\sigma}\right),&N\text{ odd}.
 \end{cases}
\end{equation}
\end{lemma}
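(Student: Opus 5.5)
The plan is to understand the multiset of exponents $\{(1-v)x-u : x\in\Z/n\Z\}$ and then evaluate the product with a cyclotomic identity. Write $1-v=\sigma w$. Since $\sigma=\gcd(1-v,n)$, we have $w\perp N$, where $N=n/\sigma$. The map $x\mapsto(1-v)x$ has image $\sigma\Z/n\Z$, which has $N$ elements, and each fibre has $\sigma$ elements. So as $x$ runs over $\Z/n\Z$, the value $(1-v)x-u$ runs over the coset $-u+\sigma\Z/n\Z$, hitting each of its $N$ elements exactly $\sigma$ times. With $r\equiv -u\pmod\sigma$ and $\sigma\nmid u$ (the derangement condition), no exponent is $0$, so every factor is defined. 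The left side of \eqref{eq:compressed} therefore equals $P^\sigma$, where
\[
P=\prod_{k=0}^{N-1}\frac{1+\zeta^{r+\sigma k}}{1-\zeta^{r+\sigma k}}.
\]

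Put $z=\zeta^r$, so that $\zeta^{\sigma k}$ runs over all $N$-th roots of unity $\omega$. The polynomial identity
\[
\prod_{\omega^N=1}(1-z\omega)=1-z^N
\]
gives $\prod(1-z\omega)=1-z^N$ and $\prod(1+z\omega)=\prod(1-(-z)\omega)=1-(-z)^N$. Here $z^N=\zeta^{rN}=e^{2\pi i r/\sigma}\neq 1$ because $\sigma\nmid r$. Hence
\[
P=\frac{1-(-1)^N z^N}{1-z^N}.
\]
This is $1$ if $N$ is even. If $N$ is odd it is $\frac{1+e^{i\theta}}{1-e^{i\theta}}$ with $\theta=2\pi r/\sigma$. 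The standard half-angle computation (multiply numerator and denominator by $e^{-i\theta/2}$) gives
\[
\frac{2\cos(\theta/2)}{-2i\sin(\theta/2)}=i\cot(\theta/2)=i\cot(\pi r/\sigma).
\]
This is $\Psi_{n,\sigma}(r)$, and raising it to the power $\sigma$ yields \eqref{eq:compressed}.

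Finally, I will check well-definedness. The value of $\cot(\pi r/\sigma)$ depends only on $r$ modulo $\sigma$, because $\cot$ has period $\pi$. The same holds for $z^N$, so the choice of $r$ is immaterial. There is no genuinely hard step. The only point needing care is the fibre-counting argument that each element of the coset is attained exactly $\sigma$ times, which uses $\gcd(1-v,n)=\sigma$, and the bookkeeping that distinguishes $-u$ from $r$ modulo $n$ versus modulo $\sigma$. The product over the coset depends only on the coset $r+\sigma\Z/n\Z$, and that coset is determined by $r$ modulo $\sigma$.
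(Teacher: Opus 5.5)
Your proof is correct and follows the paper's route: a fibre count reduces the product to the $\sigma$-th power of the $N$-term product over the coset $r+\sigma\Z/n\Z$, which is then evaluated with the factorization of $z^N-1$ over the $N$-th roots of unity. The differences are cosmetic. You substitute $z=\zeta^r$ directly, which avoids the paper's detour through $\zeta^u$ and the oddness of the cotangent. You also justify the fibre count explicitly via $1-v=\sigma w$ with $w\perp N$, whereas the paper only asserts it.
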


\begin{proof}
As $x$ runs through $\Z/n\Z$, the values
\[
 x-\pi(x)=(1-v)x-u
\]
run through the congruence class $r=-u\bmod \sigma$ in $\Z/n\Z$, each value occurring
exactly $\sigma$ times.  Hence
\[
 \prod_{x=0}^{n-1}
 \frac{1+\zeta^{x-\pi(x)}}{1-\zeta^{x-\pi(x)}}
 =\left(\prod_{m=0}^{N-1}
 \frac{1+\zeta^{r+\sigma m}}{1-\zeta^{r+\sigma m}}\right)^\sigma.
\]

Since $\zeta^\sigma$ is a primitive $N$th root of unity (for $\mathrm{ord}(\zeta^{\sigma})=\frac{n}{\gcd(n,\sigma)}=\frac{n}{\sigma} = N$),
then
\[
 z^{N}-1 = \prod_{m=0}^{N-1}(z-\zeta^{m\sigma}),
\]
and, upon substitutions $z=\zeta^{u}$ and $z=-\zeta^{u}$, we have
\[
 (\zeta^{u})^{N}-1 = \prod_{m=0}^{N-1}(\zeta^{u}-\zeta^{m\sigma})=\prod_{m=0}^{N-1}\zeta^{u}(1-\zeta^{-u+m\sigma})=(\zeta^{u})^{N}\prod_{m=0}^{N-1}(1-\zeta^{r+m\sigma})
\]
and
\[
 (-\zeta^{u})^{N}-1 = \prod_{m=0}^{N-1}(-\zeta^{u}-\zeta^{m\sigma})=\prod_{m=0}^{N-1}(-\zeta^{u})(1+\zeta^{-u+m\sigma})=(-\zeta^{u})^{N}\prod_{m=0}^{N-1}(1+\zeta^{r+m\sigma}).
\]

Therefore
\[
(-1)^{N}\frac{1-(-\zeta^{u})^N}{1-(\zeta^{u})^N}=
 \prod_{m=0}^{N-1}
 \frac{1+\zeta^{r+\sigma m}}{1-\zeta^{r+\sigma m}}.
\]

If $N$ is even this ratio is $1$.  If $N$ is odd and
$\omega=\exp(2\pi i/\sigma)$, then $(\zeta^{u})^N=\omega^{u}$ and
\[
 -\frac{1+\omega^{u}}{1-\omega^{u}}
 =-i\cot\left(\frac{\pi u}{\sigma}\right)=-i\cot\left(\frac{\pi(-r)}{\sigma}\right)=i\cot\left(\frac{\pi r}{\sigma}\right).
\]

When $\pi$ is a derangement, $r\not\equiv0\pmod \sigma$, and thus no
denominator vanishes.
\end{proof}

\section{Permutation signs and local multiplicities}\label{Sec:Signs}

Since the sign is a group homomorphism, we have
\[
 \sign(e^u.v)=\sign(e^u.1)\sign(e^0.v),
\]
so we can examine $e^{u}.1$ and $e^{0}.v$ separately.

\begin{lemma}[Translation signs]\label{lem:translation-sign}
For even $n$,
\[
 \sign(e^u.1)=(-1)^u.
\]

Consequently, for $\sigma=\gcd(v-1,n)$ and $1\leq r\leq \sigma-1$,
\begin{equation}\label{eq:residue-sign-sum}
 \sum_{\substack{0\le u<n\\u\equiv-r\, (\mathrm{mod}\,\sigma)}}(-1)^u
 =\frac{n}{\sigma}(-1)^r
\end{equation}
\end{lemma}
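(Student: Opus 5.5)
The plan has two parts. First we compute the sign of a single translation from its cycle type. Then we obtain \eqref{eq:residue-sign-sum} by a parity count over an arithmetic progression.

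For the first part, $e^u.1$ is $x\mapsto x+u$ on $\Z/n\Z$. Its orbits are the cosets of the subgroup generated by $u$. So it decomposes into $d=\gcd(u,n)$ disjoint cycles, each of length $n/d$. A cycle of length $\ell$ has sign $(-1)^{\ell-1}$, hence
\[
 \sign(e^u.1)=(-1)^{d(n/d-1)}=(-1)^{n-d}=(-1)^{d},
\]
where the last equality uses that $n$ is even. Since $n$ is even, $d=\gcd(u,n)$ has the same parity as $u$: if $u$ is even then $2\mid d$, and if $u$ is odd then $d$ is odd. This gives $\sign(e^u.1)=(-1)^u$. The representative $u$ is only defined modulo $n$, but this is harmless because $n$ is even. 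An alternative is to note that $e^1.1$ is an $n$-cycle, so it has sign $(-1)^{n-1}=-1$, and then use that $\sign$ is a homomorphism to get $\sign(e^u.1)=\sign(e^1.1)^u$. This second route is even shorter, and I would likely present it.

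For the sum, recall from Section~\ref{Sec:Power} that $\sigma=\gcd(v-1,n)$ is even. Indeed, $v$ is a unit modulo the even number $n$, so $v$ is odd and $v-1$ is even. The integers $0\le u<n$ with $u\equiv -r\pmod\sigma$ are exactly $u=u_0+k\sigma$ for $k=0,\dots,N-1$, where $N=n/\sigma$ and $u_0\equiv -r\pmod\sigma$. There are therefore exactly $N$ of them. Because $\sigma$ is even, all of them have the same parity as $-r$, which is the parity of $r$. So every term equals $(-1)^r$, and the sum is $\frac{n}{\sigma}(-1)^r$.

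I expect no real obstacle here. The only points needing care are the evenness of $\sigma$, which makes the parity constant on each residue class, and the well-definedness of $(-1)^u$ for $u\in\Z/n\Z$, which holds because $n$ is even.
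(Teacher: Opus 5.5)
Your proposal is correct and matches the paper's proof. Both arguments use the decomposition of $x\mapsto x+u$ into $\gcd(u,n)$ cycles of length $n/\gcd(u,n)$, then the fact that $\gcd(u,n)\equiv u \pmod 2$ for even $n$, and finally the observation that the residue class of $-r$ modulo the even number $\sigma$ contains $n/\sigma$ integers of constant parity. Your alternative for the first part, via $e^u.1=(e^1.1)^u$ and $\sign(e^1.1)=(-1)^{n-1}=-1$, is also valid and slightly shorter, and your explicit justification that $\sigma$ is even ($v$ odd, so $v-1$ and $n$ are both even) fills in a step the paper states without comment.
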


\begin{proof}
Translation by $u$ has $\gcd(u,n)$ cycles, each of length
$n/\gcd(u,n)$. Each one has sign $(-1)^{\frac{n}{\gcd(u,n)}-1}$ and therefore the sign
of $e^{u}.1$ is
\[
 ((-1)^{\frac{n}{\gcd(u,n)}-1})^{\gcd(u,n)}=(-1)^{n-\gcd(u,n)}.
\]

Since $n$ is even, and
$\gcd(u,n)$ and $u$ have the same parity, then
\[
 (-1)^{n-\gcd(u,n)}=(-1)^{n}(-1)^{\gcd(u,n)} = (-1)^u
\]

For the second assertion write $u=-r+j\sigma$, with
$1\leq j\leq n/\sigma$.  Since $\sigma$ is even then all signs are equal, and the total follows at once.
\end{proof}

For the affine part of the permutation we use the elementary sign formula
\begin{equation}\label{eq:multiplication-sign}
 \sign(e^0.v)= (-1)^{\frac{(v-1)(n/2-1)}{2}}
\end{equation}
due to Lerch \cite[p. 36]{Lerch1896}.

\begin{lemma}[Multiplication signs]\label{lem:refl} Let $n$ be even, $v\in(\Z/n\Z)^{\times}$ and $\sigma=\gcd(v-1,n)$. Then the multiplication
permutation $e^{0}.v:x\mapsto vx$ of $\Z/n\Z$ has sign
\[
 \epsilon_{n}(\sigma)=(-1)^{\frac{\sigma\cdot(n/2-1)}{2}}.
\]
\end{lemma}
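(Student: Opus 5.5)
Starting from Lerch's formula \eqref{eq:multiplication-sign}, $\sign(e^0.v)=(-1)^{(v-1)(n/2-1)/2}$, we need to show that the exponent $(v-1)(n/2-1)/2$ has the same parity as $\sigma(n/2-1)/2$. Since $n$ is even and $v\perp n$, $v$ is odd, so $v-1$ is even and both exponents are integers. Split into two cases according to the parity of $n/2$.

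\emph{Case $n/2$ odd} ($n\equiv 2\pmod 4$). Then $n/2-1$ is even. Both exponents equal an integer ($(v-1)/2$ or $\sigma/2$) times the even number $n/2-1$, so both are even. Both signs are $+1$, and the identity holds trivially.

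\emph{Case $n/2$ even} ($4\backslash n$). Then $n/2-1$ is odd, and we must show $(v-1)/2\equiv \sigma/2\pmod 2$. Equivalently, $v-1\equiv\sigma\pmod 4$, i.e., $4\backslash(v-1)$ if and only if $4\backslash\sigma$. This follows from $\sigma=\gcd(v-1,n)$ with $4\backslash n$: the $2$-adic valuation satisfies $\nu_2(\sigma)=\min(\nu_2(v-1),\nu_2(n))$, and since $\nu_2(n)\ge 2$ we get $\nu_2(\sigma)\ge 2$ exactly when $\nu_2(v-1)\ge 2$. As both $v-1$ and $\sigma$ are even, they are congruent to $0$ or $2$ modulo $4$ simultaneously, which proves the claim.

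The only delicate point is this $2$-adic comparison. It uses $4\backslash n$ essentially, and that is exactly why the case $n\equiv2\pmod 4$ has to be treated separately, where the factor $n/2-1$ kills the sign anyway. The conclusion is that $\sign(e^0.v)$ depends on $v$ only through $\sigma$, which is what the later grouping by $\sigma$ requires.
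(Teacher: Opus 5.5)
Your proof is correct and follows essentially the same route as the paper: start from Lerch's formula, note that the sign is trivially $+1$ when $n/2$ is odd, and when $4\backslash n$ show that $(v-1)/2$ and $\sigma/2$ have the same parity by comparing $\nu_2(v-1)$ with $\nu_2(\sigma)=\min(\nu_2(v-1),\nu_2(n))$. Your version spells out the integrality and the case $v=1$ slightly more carefully than the paper's terse argument, but the idea is identical.
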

\begin{proof}
When $n/2$ is odd then $\sign(e^{0}.v)$ is $1$ regardless of the value of $v$. When $n/2$ is
even, then it is $(-1)^{\frac{v-1}{2}}$. This is $-1$ if, and only if, $v\equiv 3\pmod{4}$, which
is equivalent to $v_{2}(v-1)=1$, which in turn is equivalent to $v_{2}(\sigma)=1$.
\end{proof}

For a divisor $\sigma \backslash n$, let
\[
 N_n(\sigma)=|\{v\in(\Z/n\Z)^\times:\gcd(1-v,n)=\sigma\}|.
\]

For instance, for $n=12$ and $\sigma = \gcd(5-1,12)=4$, we have
\[
 N_{12}(4) = |\{v\in(\Z/12\Z)^{\times}:\gcd(v-1,12)=4\}|=1.
\]

\begin{theorem}[V. Sita Ramaiah, 1978, \cite{Ramaia}] Let $f$ be an arbitrary arithmetic function. Then, for every $n\in\mathbb{N}$
\[
 M_{f}(n):=\sum_{v=1}^{n} [v\perp n]f(\gcd(v-1,n)) = \varphi(n)\sum_{d\backslash n}\frac{(\mu * f)(d)}{\varphi(d)}.
\]
\end{theorem}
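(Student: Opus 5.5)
The plan is to use Möbius inversion to turn $f(\gcd(v-1,n))$ into a sum of indicator conditions, and then count units in fixed residue classes via the Chinese remainder theorem. Write $g=\mu*f$. Möbius inversion gives $f=g*1$, that is,
\[
 f(\gcd(v-1,n))=\sum_{d\backslash \gcd(v-1,n)} g(d)=\sum_{d\backslash n} g(d)\,[d\backslash v-1].
\]
Substituting into $M_f(n)$ and exchanging the order of summation yields
\[
 M_f(n)=\sum_{d\backslash n} g(d)\,\bigl|\{1\le v\le n:\ v\perp n,\ v\equiv 1 \pmod d\}\bigr|.
\]
It therefore suffices to prove that for every $d\backslash n$ the inner count equals $\varphi(n)/\varphi(d)$.

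For the count, I would use the reduction homomorphism $(\Z/n\Z)^\times\to(\Z/d\Z)^\times$, $v\mapsto v\bmod d$. The set we are counting is exactly its kernel. The key step is to show this map is surjective. Once that holds, the kernel has order $|(\Z/n\Z)^\times|/|(\Z/d\Z)^\times|=\varphi(n)/\varphi(d)$, which finishes the proof.

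Surjectivity is the only step needing care, because a lift of a unit mod $d$ need not be a unit mod $n$. I would handle it prime by prime using CRT. Factor $n=\prod p^{a_p}$ and $d=\prod p^{b_p}$ with $b_p\le a_p$. Given $w\perp d$, choose a residue $v_p$ mod $p^{a_p}$ for each prime $p\backslash n$ as follows:
\begin{itemize}
\item if $b_p\ge1$, take $v_p\equiv w \pmod{p^{b_p}}$; this is automatically prime to $p$ because $p\centernot\backslash w$;
\item if $b_p=0$, take $v_p=1$.
\end{itemize}
Gluing these by CRT gives $v\perp n$ with $v\equiv w\pmod d$.

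Alternatively, I could avoid group theory and directly count $v=1+kd$, $0\le k<n/d$, coprime to $n$ using inclusion–exclusion over the primes dividing $n$ but not $d$. This gives
\[
 \frac{n}{d}\prod_{p\backslash n,\ p\centernot\backslash d}\Bigl(1-\frac1p\Bigr)=\frac{\varphi(n)}{\varphi(d)}.
\]
Either route works. I expect this counting step to be the main, though modest, obstacle; the rest is formal rearrangement.
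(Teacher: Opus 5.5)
The paper does not prove this statement; it simply quotes it from Sita Ramaiah, whose paper works in the broader setting of regular convolutions. Your argument is a correct, self-contained proof of the Dirichlet case that the paper actually uses. The expansion $f=(\mu*f)*1$ and the exchange of sums are fine, including $v=1$, where $\gcd(0,n)=n$. The $v\perp n$ with $v\equiv 1\pmod d$ form exactly the kernel of $(\Z/n\Z)^\times\to(\Z/d\Z)^\times$, and your prime-by-prime CRT lift proves surjectivity. Your inclusion--exclusion variant is also sound. Its only unstated input is that each prime $p\backslash n$ with $p\centernot\backslash d$ divides $n/d$, so a product $P$ of distinct such primes satisfies $P\backslash 1+kd$ for exactly $(n/d)/P$ values $0\le k<n/d$.

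The citation buys generality, while your route makes the paper's only application transparent. In Proposition~\ref{prop:local-counts} the paper specializes to $f(x)=[x=p^e]$ and evaluates $\mu*f$, but your kernel count alone already does the job. For $0\le e<a$ one has $[\gcd(v-1,p^a)=p^e]=[p^e\backslash v-1]-[p^{e+1}\backslash v-1]$. Counting units then gives $N_{p^a}(p^e)=\varphi(p^a)/\varphi(p^e)-\varphi(p^a)/\varphi(p^{e+1})$, together with $N_{p^a}(p^a)=1$. These are exactly the expressions the paper reaches after passing through the general identity.
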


The following local description is immediate from the Chinese
remainder theorem and Ramaiah's theorem.

\begin{proposition}[Local multiplicities]\label{prop:local-counts}
Let $n=\prod_p p^{a}$ and write $e_{p}=v_p(\sigma)$ for the multiplicity of $p$ in the
prime factorization of $\sigma$.  Then
\[
 N_n(\sigma)=\prod_{p^{a}\backslash\backslash n}N_{p^a}(p^{e_{p}}),
\]
where
\[
 N_{p^a}(p^{e})=
 \begin{cases}
 p^{a-1}(p-2),&e=0,\\
 p^{a-e-1}(p-1),&1\le e<a,\\
 1,&e=a.
 \end{cases}
\]
\end{proposition}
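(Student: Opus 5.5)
The plan is to prove the statement in two stages: first reduce it to prime powers by the Chinese remainder theorem, then count directly in $(\Z/p^a\Z)^\times$.

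\emph{Reduction to prime powers.} Write $n=\prod_p p^{a}$. The isomorphism $(\Z/n\Z)^\times\cong\prod_p(\Z/p^a\Z)^\times$ sends $v$ to its residues $(v_p)$. Since $\gcd(v-1,n)=\prod_p p^{\min(a,v_p(v-1))}$ and $v_p(v-1)$ depends only on $v \bmod p^a$ (after truncating at $a$), the condition $\gcd(v-1,n)=\sigma$ splits into independent local conditions $\gcd(v_p-1,p^a)=p^{e_p}$. Counting tuples then gives the product formula $N_n(\sigma)=\prod_p N_{p^a}(p^{e_p})$.

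Ramaiah's theorem offers an alternative route. Take $f=[\,\cdot=\sigma]$; then $M_f(n)=N_n(\sigma)$. Both $\varphi$ and $d\mapsto(\mu*f)(d)/\varphi(d)$ factor over primes, since $f$ is the indicator of a single divisor, which factors as a product of local indicators. This again yields the product structure. The direct CRT argument is cleaner, so I would present that one and only remark that Ramaiah's theorem gives the same counts.

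\emph{Local count.} Fix $p^a$ and $0\le e\le a$. I count $v\in(\Z/p^a\Z)^\times$ with $\gcd(v-1,p^a)=p^e$.
\begin{itemize}
\item For $1\le e<a$: the condition means $v=1+p^e w$ with $p\nmid w$ and $w$ taken modulo $p^{a-e}$. Every such $v$ is automatically a unit because $v\equiv1\pmod p$. This gives $\varphi(p^{a-e})=p^{a-e-1}(p-1)$ choices.
\item For $e=a$: only $v=1$ qualifies, so the count is $1$.
\item For $e=0$: we need $v\not\equiv0$ and $v\not\equiv1\pmod p$. That leaves $p-2$ residues modulo $p$, each with $p^{a-1}$ lifts, for a total of $p^{a-1}(p-2)$.
\end{itemize}
As a consistency check, the cases $e\ge1$ sum to $p^{a-1}$, the number of units $\equiv1\pmod p$, and adding the $e=0$ count gives $\varphi(p^a)$.

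There is no real obstacle here. The one point needing care is that the local conditions at different primes are independent under CRT. This is exactly where the truncation $\min(a,v_p(v-1))$ matters, and it is also why primes with $e_p=0$ contribute the factor $p^{a-1}(p-2)$. Note that for $p=2$ this factor vanishes, consistent with $\sigma$ always being even.
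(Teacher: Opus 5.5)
Your proof is correct. It shares the CRT reduction with the paper but does the local count by a different, more elementary route.

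\textbf{Reduction to prime powers.} This step matches the paper, which simply cites multiplicativity of $\gcd(v-1,n)$ in $n$ together with the Chinese remainder theorem. Your version is more careful: you note that $\min(a,v_p(v-1))$ depends only on $v \bmod p^a$, and that is exactly what makes the local conditions independent.

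\textbf{Local count.} The paper applies Sita Ramaiah's theorem with $f=[\,\cdot=p^e]$. This gives $N_{p^a}(p^e)=\varphi(p^a)\sum_{e\le\ell\le a}\mu(p^{\ell-e})/\varphi(p^\ell)$, and the paper then evaluates the at most two surviving terms. You instead parametrize $v=1+p^e w$ with $p\nmid w$ and count residues directly.

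\textbf{Comparison.} Your route is self-contained and needs no external theorem. It also comes with the built-in check $\sum_e N_{p^a}(p^e)=\varphi(p^a)$. The paper's route places the count within the Menon-type framework. As your aside suggests, Ramaiah's formula could even give the global count without a separate CRT step, because $\sum_{d\backslash n}[\sigma\backslash d]\,\mu(d/\sigma)/\varphi(d)$ factors over the primes of $n$. Strictly speaking, $d\mapsto(\mu*f)(d)/\varphi(d)$ is not multiplicative in the usual sense, since it vanishes at $d=1$ when $\sigma>1$; but the divisor sum still factors. For this particular $f$, the theorem is heavier machinery than your elementary count requires.
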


\begin{proof} Since the function $h(n)=\gcd(v-1,n)$ is multiplicative, the Chinese remainder theorem
allows us to study $N_{p^{a}}(p^{e})$ and then multiply the results. If we define $f(x)=[x=p^{e}]$ and
$e=v_{p}(\sigma)$ then $M_{f}(p^{a}) = N_{p^{a}}(p^{e})$ and
\begin{align*}
 N_{p^{a}}(p^{e}) &= \sum_{v=1}^{p^{a}}[v\perp p^{a}][\gcd(v-1,p^{a})=p^{e}]\\
 &=\varphi(p^{a})\sum_{d\backslash p^{a}}\frac{1}{\varphi(d)}\sum_{st=d}\mu(s)[t=p^{e}]\\
 &=\varphi(p^{a})\sum_{e\leq \ell\leq a}\frac{1}{\varphi(p^{\ell})}\mu(p^{\ell}/p^{e}).
\end{align*}

This simplifies to
\begin{align*}
N_{p^{a}}(p^{e}) &= \varphi(p^{a})\sum_{\ell=0}^{\min(1,a-e)}\frac{\mu(p^{\ell})}{\varphi(p^{e+\ell})}\\
&=\begin{cases}
\varphi(p^{a})\left(1-\frac{1}{\varphi(p)}\right), & e=0,\\
\varphi(p^{a})\left(\frac{1}{\varphi(p^{e})}-\frac{1}{\varphi(p^{e+1})}\right), & 1\leq e < a,\\
\varphi(p^{a})\cdot \frac{1}{\varphi(p^{a})}, & e=a,
\end{cases}\\
&=\begin{cases}
p^{a-1}(p-2), & e=0,\\
p^{a-e-1}(p-1), & 1\leq e < a,\\
1, & e=a.
\end{cases} \qedhere
\end{align*}
\end{proof}

\begin{remark} Note that $N_{n}(\sigma)$ and Proposition \ref{prop:local-counts} apply to every
positive integer $n$, including odd $n$, with $N_{1}(1)=1$. We will use this fact later.
\end{remark}

\section{The cyclic and dihedral subgroups}\label{Sec:CyclicandDihedral}

Before proceeding to the full affine group, we find the sum for two smaller subgroups.
In this section $n\geq 4$. Let
\[
 \mathcal{T}_{n}=\{e^{u}.1:u\in\Z/n\Z\}
\]
be the cyclic translation subgroup and let
\[
 \mathcal{D}_{n}=\{e^{u}.\pm 1:u\in\Z/n\Z\}
\]
be the dihedral subgroup of order $2n$. For even $n$, set
\[
 C_{n} = \sum_{r=1}^{n-1}(-1)^{r}\left(i\cot\frac{\pi r}{n}\right)^{n}.
\]

\begin{proposition}[Translations]
For even $n$
\[
S_{n}(\mathcal{T}_{n}) = C_{n}.
\]
\end{proposition}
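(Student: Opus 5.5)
For $v=1$ we have $\sigma=\gcd(0,n)=n$ and $N=n/\sigma=1$, which is odd. The translation $e^u.1$ is a derangement exactly when $u\not\equiv 0\pmod n$, so the derangements in $\mathcal{T}_n$ are $e^u.1$ with $1\le u\le n-1$. The plan is to evaluate each term with Lemma~\ref{lem:compression} and each sign with Lemma~\ref{lem:translation-sign}, and then reindex.

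\emph{Products.} For $\pi=e^u.1$ we may take $r\equiv -u\pmod n$, so $r=n-u\in\{1,\dots,n-1\}$. Since $N$ is odd, Lemma~\ref{lem:compression} gives
\[
\prod_{x=0}^{n-1}\frac{1+\zeta^{x-\pi(x)}}{1-\zeta^{x-\pi(x)}}=\left(i\cot\frac{\pi r}{n}\right)^{n}.
\]
As a check, $x-\pi(x)=-u$ for every $x$, so the product is directly $\bigl(\tfrac{1+\zeta^{-u}}{1-\zeta^{-u}}\bigr)^n$. This agrees with the lemma, because $\frac{1+\omega^{a}}{1-\omega^{a}}=-i\cot(\pi a/n)$ with $a=-u$ equals $i\cot(\pi u/n)$, and $i\cot(\pi(n-u)/n)=-i\cot(\pi u/n)$. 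The resulting sign discrepancy disappears in the $n$-th power since $n$ is even.

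\emph{Signs.} By Lemma~\ref{lem:translation-sign}, $\sign(e^u.1)=(-1)^u=(-1)^{n-r}=(-1)^r$, using that $n$ is even.

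\emph{Conclusion.} Summing over $u=1,\dots,n-1$ is the same as summing over $r=n-u$ through $1,\dots,n-1$. This gives $S_n(\mathcal{T}_n)=\sum_{r=1}^{n-1}(-1)^r\bigl(i\cot\frac{\pi r}{n}\bigr)^n=C_n$. The argument is bookkeeping; the only step needing care is keeping the signs of $r$ versus $u$ consistent, and evenness of $n$ removes every ambiguity there.
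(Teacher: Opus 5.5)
Your proof is correct and follows the paper's argument exactly: apply Lemma~\ref{lem:compression} with $\sigma=n$, $N=1$ and $r\equiv -u \pmod n$, use $(-1)^u=(-1)^r$ for even $n$, and reindex over $r=1,\dots,n-1$. One small slip in your side check: the correct identity is $\frac{1+\omega^{a}}{1-\omega^{a}}=i\cot(\pi a/n)$ (for $n=4$, $a=1$ this gives $\frac{1+i}{1-i}=i$), so the direct computation matches the lemma factor by factor and there is no sign discrepancy to absorb, though this aside does not affect your conclusion.
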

\begin{proof}
The translations $e^{u}.1$ with $u\neq 0$ are precisely the translation derangements. Here $\sigma = n$ and $N=\frac{n}{n}=1$
in Lemma \ref{lem:compression}, so for $r\equiv -u \pmod{n}$ their product is
\[
\left(i\cot\frac{\pi r}{n}\right)^{n}.
\]

Since $n$ is even, $(-1)^{u}=(-1)^{r}$. Summing over $u=1,\ldots,n-1$ gives the result.
\end{proof}

\begin{theorem}[Dihedral derangements]\label{Thm:DihDer} For even $n\geq 4$,
\[
S_{n}(\mathcal{D}_{n}) = C_{n}+[4\backslash n]\frac{n}{2}.
\]
\end{theorem}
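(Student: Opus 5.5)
The dihedral group splits as $\mathcal{D}_{n}=\mathcal{T}_{n}\sqcup\{e^{u}.(-1):u\in\Z/n\Z\}$. The translation part contributes $C_{n}$ by the preceding proposition, so what remains is to show that the reflections contribute $[4\backslash n]\frac{n}{2}$.

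For a reflection $\pi=e^{u}.(-1)$ we have $v=-1$. Hence $\sigma=\gcd(2,n)=2$, since $n$ is even, and $N=n/2$. By the derangement criterion of Section \ref{Sec:Power}, $\pi$ is a derangement exactly when $2\centernot\backslash u$, that is, when $u$ is odd. For such $u$ we take $r=1$ in Lemma \ref{lem:compression}, and the product equals $\Psi_{n,2}(1)^{2}$.

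We now split according to the parity of $N=n/2$.
\begin{itemize}
\item If $N$ is even, i.e.\ $4\backslash n$, then $\Psi=1$ and the product is $1$.
\item If $N$ is odd, then $\Psi=i\cot(\pi/2)=0$, so every reflection contributes $0$. This matches the Iverson bracket $[4\backslash n]$.
\end{itemize}

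Next we compute the sign of $e^{u}.(-1)$, which by the homomorphism property equals $\sign(e^{u}.1)\,\sign(e^{0}.(-1))$.
\begin{itemize}
\item By Lemma \ref{lem:translation-sign}, $\sign(e^{u}.1)=(-1)^{u}=-1$, since $u$ is odd.
\item By Lemma \ref{lem:refl} with $\sigma=2$, $\sign(e^{0}.(-1))=\epsilon_{n}(2)=(-1)^{n/2-1}$, which equals $-1$ when $4\backslash n$.
\end{itemize}
(As a check, $x\mapsto -x$ fixes $0$ and $n/2$ and swaps the other $n-2$ points in $(n-2)/2$ transpositions, giving the same sign.) So for $4\backslash n$ each reflection derangement has sign $(-1)(-1)=+1$.

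There are exactly $n/2$ odd residues $u$. Summing gives a reflection contribution of $\frac{n}{2}$ when $4\backslash n$ and $0$ otherwise. Adding $C_{n}$ proves the theorem.

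The only delicate step is the sign bookkeeping. In particular, one must confirm that Lerch's formula \eqref{eq:multiplication-sign} applies to $v=n-1$ (taking $v$ as the representative $n-1$, which is $\equiv 3\pmod 4$ when $4\backslash n$). The direct transposition count above settles this independently.
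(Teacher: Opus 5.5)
Your proof is correct and follows the paper's argument essentially step for step: translations contribute $C_n$, and reflections have $\sigma=2$ and $N=n/2$, are derangements exactly for odd $u$, contribute $0$ when $n\equiv 2\pmod{4}$, and contribute $+1$ each (sign $(-1)(-1)$) when $4\backslash n$. Your direct transposition count for $x\mapsto -x$ is a useful independent check of the sign that the paper takes from Lerch's formula via Lemma~\ref{lem:refl}.
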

\begin{proof}
The contribution of translations is $C_{n}$ by the preceding proposition. It remains to consider the reflections $e^{u}.-1:x\mapsto -x+u$. Such a reflection has a fixed point if and only if
\[
 2x \equiv u \pmod{n}
\]
has a solution. Since $n$ is even, it is a derangement only when $u$ is odd. Thus there are $n/2$ reflective derangements.  For a reflection $\sigma = \gcd(-2,n) = 2$ and $N=n/2$. If $n\equiv 2 \pmod{4}$, then $N$ is odd, $r\equiv -u \equiv 1\pmod{2}$, and Lemma \ref{lem:compression} gives
\[
\left(i\cot\frac{\pi}{2}\right)^{2} = 0.
\]

Thus all reflective derangements contribute zero. If $4\backslash n$, then $N$ is even and their product is $1$. For $u$ odd, Lemma \ref{lem:translation-sign} gives $\sign(e^{u}.1)=-1$. Taking the representative $v=n-1$ for $-1$ in Lemma \ref{lem:refl} we have
\[
 \sign(e^{0}.-1) = (-1)^{\frac{n-2}{2}} = -1,
\]
for $4\backslash n$. Hence every derangement reflection has positive sign, and their total contribution is $n/2$.
\end{proof}

The involutive derangements of $\mathcal{D}_{n}$ have quite a simple sum. These are the dihedral quasipolarities, and we denote their set by $\mathcal{Q}_{n}^{\mathrm{dih}}$.

\begin{corollary}[Dihedral quasipolarities] For even $n\geq 4$.
\[
S_{n}(\mathcal{Q}_{n}^{\mathrm{dih}}) = [4\backslash n]\frac{n}{2}.
\]
\end{corollary}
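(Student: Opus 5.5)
The plan is to identify exactly which elements of $\mathcal{D}_{n}$ are involutive derangements, and then read off their contribution from the computation already carried out in the proof of Theorem \ref{Thm:DihDer}.

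\textbf{Step 1: the translations.} A translation $e^{u}.1$ is an involution if and only if $2u\equiv 0\pmod n$, so $u\in\{0,n/2\}$.
\begin{itemize}
\item For $u=0$ we get the identity, which is not a derangement.
\item For $u=n/2$ we get $x\mapsto x+n/2$, which is a fixed-point-free involution.
\end{itemize}
Hence the translation part of $\mathcal{Q}_{n}^{\mathrm{dih}}$ is the single element $e^{n/2}.1$.

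Its contribution is computed with Lemma \ref{lem:compression}. Here $\sigma=n$, $N=1$ and $r\equiv -n/2\equiv n/2 \pmod n$, so the product equals
\[
\left(i\cot\frac{\pi}{2}\right)^{n}=0^{n}=0.
\]
Therefore this element contributes nothing.

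\textbf{Step 2: the reflections.} Every reflection $e^{u}.-1$ is automatically an involution, because
\[
-(-x+u)+u=x.
\]
So the reflective part of $\mathcal{Q}_{n}^{\mathrm{dih}}$ consists of exactly the reflective derangements treated in the proof of Theorem \ref{Thm:DihDer}, namely those with $u$ odd. That proof shows their total contribution is $0$ when $n\equiv 2\pmod 4$, and $n/2$ when $4\backslash n$.

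\textbf{Step 3: conclusion.} Adding the two contributions gives
\[
S_{n}(\mathcal{Q}_{n}^{\mathrm{dih}})=0+[4\backslash n]\frac{n}{2}=[4\backslash n]\frac{n}{2},
\]
which is the claim.

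The only point needing care is the vanishing of the half-turn term in Step 1. It rests on $\cot(\pi/2)=0$, which is legitimate because $e^{n/2}.1$ is a genuine derangement: $r\not\equiv 0\pmod n$, so no denominator in Lemma \ref{lem:compression} vanishes.
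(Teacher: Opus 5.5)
Your proof is correct and follows the paper's argument. You identify the antipodal translation $e^{n/2}.1$ as the only involutive translation derangement and show that its product vanishes, then reuse the reflection computation from the proof of Theorem~\ref{Thm:DihDer}. The only cosmetic difference is how you kill the antipodal term: you apply Lemma~\ref{lem:compression} to get $i\cot(\pi/2)=0$, while the paper observes directly that every numerator $1+\zeta^{-n/2}$ is zero.
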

\begin{proof}
The only translation derangement that is involutive is the antipodal one: $e^{n/2}.1$. Its product vanishes because $\zeta^{-n/2}=-1$, so every factor has numerator $1+\zeta^{-n/2}=0$. Thus only reflective derangements contribute, and we know that their contribution is $\frac{n}{2}$ when $4\backslash n$, as was seen in the proof of Theorem \ref{Thm:DihDer}.
\end{proof}

\begin{remark}
It is interesting to note that the sum for dihedral quasipolarities coincides with that for affine quasipolarities when $n/2$ is odd, and it retrieves a summand of the more general formula for $n/2$ even.
\end{remark}

\section{The divisor formula}\label{Sec:Formula}
We can now state the main reduction.

\begin{theorem}[Affine Wang--Sun reduction]\label{thm:reduction}
For even $n$,
\begin{equation}\label{eq:divisor-formula}
 S_n=\sum_{\substack{\sigma \backslash n\\\sigma\ \mathrm{even}}}
 \epsilon_n(\sigma)N_n(\sigma)\frac n\sigma\,J_{n,\sigma},
\end{equation}
where
\begin{equation}\label{eq:Jnd}
 J_{n,\sigma}=
 \begin{cases}
 -1,&n/\sigma\text{ even},\\[5pt]
 \displaystyle\sum_{r=1}^{\sigma-1}(-1)^r
 \left(i\cot\frac{\pi r}{\sigma}\right)^\sigma,&n/\sigma\text{ odd}.
 \end{cases}
\end{equation}
\end{theorem}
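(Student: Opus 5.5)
The plan is to reorganize the defining sum \eqref{eq:Sn} first by the linear part $v$, then by $\sigma=\gcd(v-1,n)$, and inside each fibre by the residue class of the translation parameter $u$ modulo $\sigma$. Since $n$ is even, every unit $v$ is odd, so $v-1$ is even and $\sigma$ is always an even divisor of $n$. This accounts for the restriction on the range of summation in \eqref{eq:divisor-formula}.

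Fix $v\in(\Z/n\Z)^\times$ with $\gcd(v-1,n)=\sigma$, and put $N=n/\sigma$. By the fixed-point criterion of Section~\ref{Sec:Power}, $e^u.v$ is a derangement exactly when $\sigma\centernot\backslash u$. Equivalently, $r\equiv -u\pmod\sigma$ lies in $\{1,\dots,\sigma-1\}$.

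For such a $u$ I would combine three earlier results:
\begin{itemize}
\item the sign factorization $\sign(e^u.v)=\sign(e^u.1)\sign(e^0.v)$;
\item Lemma~\ref{lem:translation-sign}, giving $\sign(e^u.1)=(-1)^u$;
\item Lemma~\ref{lem:refl}, giving $\sign(e^0.v)=\epsilon_n(\sigma)$.
\end{itemize}
Together these give $\sign(e^u.v)=\epsilon_n(\sigma)(-1)^u$. Lemma~\ref{lem:compression} then says the product equals $\Psi_{n,\sigma}(r)^\sigma$, which depends on $u$ only through $r$.

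Summing over $u$ in the class of $-r$ and applying \eqref{eq:residue-sign-sum}, the contribution of this $v$ becomes
\[
\epsilon_n(\sigma)\,\frac n\sigma\sum_{r=1}^{\sigma-1}(-1)^r\,\Psi_{n,\sigma}(r)^\sigma .
\]
I now split according to the parity of $N$.
\begin{itemize}
\item If $N$ is odd, then $\Psi_{n,\sigma}(r)=i\cot(\pi r/\sigma)$, and the inner sum is exactly the second case of \eqref{eq:Jnd}.
\item If $N$ is even, then $\Psi_{n,\sigma}(r)=1$, and the inner sum is $\sum_{r=1}^{\sigma-1}(-1)^r$. Since $\sigma$ is even, this has $\sigma/2$ odd terms and $\sigma/2-1$ even terms, so it equals $-1$.
\end{itemize}
In both cases the inner sum is $J_{n,\sigma}$.

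The resulting expression depends on $v$ only through $\sigma$. Summing over all $v$ with $\gcd(v-1,n)=\sigma$ therefore just multiplies it by $N_n(\sigma)$, which Proposition~\ref{prop:local-counts} makes explicit. Finally summing over the even divisors $\sigma$ of $n$ yields \eqref{eq:divisor-formula}.

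The only delicate point I expect is the choice of representative $r\in\{1,\dots,\sigma-1\}$. I need the parity $(-1)^u=(-1)^r$ to be well defined, which holds because $\sigma$ is even. I also need Lemma~\ref{lem:compression} to apply with this same representative, which holds because $\cot(\pi r/\sigma)$ is $\sigma$-periodic in $r$. Beyond that, the argument is bookkeeping.
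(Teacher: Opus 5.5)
Your proposal is correct and takes essentially the same route as the paper. Like the paper, you group by $\sigma=\gcd(v-1,n)$ and by the nonzero residue $r\equiv -u \pmod{\sigma}$, evaluate the product with the power representation lemma, sum the translation signs via \eqref{eq:residue-sign-sum}, use the constant multiplication sign $\epsilon_n(\sigma)$ on each fibre, and multiply by $N_n(\sigma)$. You also make explicit two points the paper leaves tacit: every unit is odd, so $\sigma$ is even, and the choice of the representative $r$ is harmless.
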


\begin{proof}
Group \eqref{eq:Sn} first by $\sigma=\gcd(1-v,n)$ and then by the nonzero
residue $r\equiv-u\pmod \sigma$.  Lemma~\ref{lem:compression} evaluates the
product.  Lemma~\ref{lem:translation-sign} shows that the translations in a fixed residue
class contribute $(n/\sigma)(-1)^r$.

The multiplication sign is constant among all $v$ satisfying
$\gcd(1-v,n)=\sigma$ and is equal to $\epsilon_n(\sigma)$, while the number of
such multipliers is $N_n(\sigma)$.  If $n/\sigma$ is even, then
$\Psi_{n,\sigma}(r)=1$ and, since $\sigma$ is even,
\[
 \sum_{r=1}^{\sigma-1}(-1)^r=-1.
\]
If $n/\sigma$ is odd, Lemma~\ref{lem:compression} gives the second line of
\eqref{eq:Jnd}.
\end{proof}

\begin{remark}\label{rem:2part}
If $n/\sigma$ is odd, then $\sigma$ contains the full $2$-primary part of $n$.
In particular $v_2(\sigma)=v_2(n)$, and hence
\[
 \epsilon_n(\sigma)=1
\]
throughout the cotangent case of \eqref{eq:Jnd}.  Thus the only
negative multiplication-sign correction occurs in the elementary
case $n/\sigma$ even.
\end{remark}

\section{Evaluation of the cotangent summation}\label{Sec:cotangent}

For even $\sigma=2m$, put
\[
 C_{\sigma}=\sum_{r=1}^{\sigma-1}(-1)^r
 \left(i\cot\frac{\pi r}{\sigma}\right)^\sigma.
\]

Splitting it by even and odd indices, we have
\begin{align*}
C_{\sigma} &= \sum_{r=1}^{\sigma-1}[2\backslash r]\left(i\cot\frac{\pi r}{\sigma}\right)^\sigma
-\sum_{r=1}^{\sigma-1}[2\centernot\backslash r]\left(i\cot\frac{\pi r}{\sigma}\right)^\sigma\\
&=\sum_{r=1}^{\sigma-1}[2\backslash r]i^{\sigma}\left(\cot\frac{\pi r}{\sigma}\right)^\sigma
-\sum_{r=1}^{\sigma-1}[2\centernot\backslash r]i^{\sigma}\left(\cot\frac{\pi r}{\sigma}\right)^\sigma\\
&=\sum_{r=1}^{2m-1}[2\backslash r](-1)^{m}\left(\cot\frac{\pi r}{2m}\right)^{2m}
-\sum_{r=1}^{2m-1}[2\centernot\backslash r](-1)^{m}\left(\cot\frac{\pi r}{2m}\right)^{2m}\\
&=\sum_{r=1}^{m-1}(-1)^{m}\left(\cot\frac{\pi 2r}{2m}\right)^{2m}
-\sum_{r=0}^{m-1}(-1)^{m}\left(\cot\frac{\pi (2r+1)}{2m}\right)^{2m}\\
&=\sum_{r=1}^{m-1}(-1)^{m}\left(\cot\frac{\pi 2r}{2m}\right)^{2m}
-\sum_{r=0}^{m-1}(-1)^{m}\left(\cot\frac{\pi (2r+1)}{2m}\right)^{2m}\\
&\phantom{=}+\sum_{r=1}^{m-1}(-1)^{m}\left(\cot\frac{\pi 2r}{2m}\right)^{2m}-\sum_{r=1}^{m-1}(-1)^{m}\left(\cot\frac{\pi 2r}{2m}\right)^{2m}\\
&=2\sum_{r=1}^{m-1}(-1)^{m}\left(\cot\frac{\pi r}{m}\right)^{2m}-\sum_{r=1}^{2m-1}(-1)^{m}\left(\cot\frac{\pi r}{2m}\right)^{2m}\\
&=(-1)^{m}\left[2\sum_{r=1}^{m-1}\left(\cot\frac{\pi r}{m}\right)^{2m}-\sum_{r=1}^{2m-1}\left(\cot\frac{\pi r}{2m}\right)^{2m}\right]
\end{align*}

If we define
\[
 P_{2m}(q) = \sum_{r=1}^{q-1}\left(\cot\frac{\pi r}{q}\right)^{2m}
\]
then we have
\[
 C_{\sigma} = (-1)^{m}(2P_{2m}(m)-P_{2m}(2m)).
\]

\begin{proposition}(Cvijovi\'{c}-Klinowski, 2000, \cite{CK00})
The polynomial
\[
p_{q}(x)=\sum_{r=0}^{\lfloor\frac{q-1}{2}\rfloor}(-1)^{r}\binom{q}{2r+1}x^{q-2r-1}
\]
has simple roots
\[
x_{r} = \cot\frac{\pi r}{q}, \quad 1\leq r \leq q-1.
\]
\end{proposition}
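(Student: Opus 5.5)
The natural route is De Moivre's formula. For $\theta\notin\pi\Z$ we have
\[
(\cos\theta+i\sin\theta)^q=\sin^q\theta\,(\cot\theta+i)^q .
\]
Expanding the right-hand side binomially, the imaginary part is
\[
\sin^q\theta\sum_{r=0}^{\lfloor\frac{q-1}{2}\rfloor}(-1)^r\binom{q}{2r+1}\cot^{q-2r-1}\theta
=\sin^q\theta\;p_q(\cot\theta).
\]
Comparing with the left-hand side gives the identity
\[
\sin(q\theta)=\sin^q\theta\;p_q(\cot\theta),\qquad \theta\notin\pi\Z.
\]

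Now set $\theta=\pi r/q$ with $1\le r\le q-1$. Then $\sin(q\theta)=\sin(\pi r)=0$, while $\sin\theta\neq0$. So $p_q(\cot(\pi r/q))=0$ for every such $r$, and each $x_r=\cot(\pi r/q)$ is a root of $p_q$.

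Next I would check the degree and the distinctness of these roots.
\begin{itemize}
\item Only the $r=0$ term contributes the top degree, so $p_q$ has degree exactly $q-1$ with leading coefficient $\binom{q}{1}=q\neq0$.
\item The values $\pi r/q$ for $1\le r\le q-1$ lie in the open interval $(0,\pi)$.
\item On $(0,\pi)$, $\cot$ is strictly decreasing, hence injective.
\end{itemize}
Therefore the $q-1$ numbers $x_1,\dots,x_{q-1}$ are pairwise distinct. A polynomial of degree $q-1$ with $q-1$ distinct roots has exactly those roots, each simple, and
\[
p_q(x)=q\prod_{r=1}^{q-1}(x-x_r).
\]

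There is no real obstacle here; the only care needed is bookkeeping in the binomial expansion. One must confirm that the imaginary part collects the odd powers of $i$, namely $i^{2r+1}=(-1)^r i$ with binomial coefficient $\binom{q}{2r+1}$, and that these pair with $\cot^{q-2r-1}\theta$, which matches the stated form of $p_q$. For later use with the Newton--Girard recursions, I would also note that $p_q$ involves only every other power of $x$, so $p_q(-x)=\pm p_q(x)$. This is consistent with the root set being symmetric under $r\mapsto q-r$.
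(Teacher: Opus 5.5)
Your argument is correct and complete: the De Moivre identity $\sin q\theta=\sin^q\theta\,p_q(\cot\theta)$, the degree count with leading coefficient $q$, and the injectivity of $\cot$ on $(0,\pi)$ together give $p_q(x)=q\prod_{r=1}^{q-1}(x-x_r)$, which is the full statement. The paper gives no proof of its own and simply cites Cvijovi\'{c}--Klinowski; your De Moivre argument is the classical proof of this fact, and your factorization is exactly what the paper needs when it divides by the leading coefficient $q$ and applies Newton--Girard to $P_{2m}(q)$.
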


In particular, after dividing the coefficients of the polynomial $p_{q}(x)$ by its principal coefficient (which
is $\binom{q}{1}=q$) so it is monic, the resulting coefficients are
\[
\sigma_{j} = [2\backslash j](-1)^{j/2}\frac{1}{q}\binom{q}{j+1},
\]
with the convention that $\binom{m}{n}=0$ for $n>m$. By the Newton-Girard formulas (see \cite[Theorem 10.12.2]{Seroul}), we have
\begin{align*}
 P_{2m}(q) &= -2m\sigma_{2m}-\sum_{i=1}^{2m-1}(-1)^{i}\sigma_{2m-i}P_{i}(q)\\
 &=-2m\sigma_{2m}-\sum_{j=1}^{m-1}\sigma_{2m-2j}P_{2j}(q)
\end{align*}
where the $P_{i}$ with odd $i$ vanish. In particular
\begin{align*}
P_{2}(q) &= -2\sigma_{2} = -2(-1)^{1}\frac{1}{q}\binom{q}{3} = 2\frac{q(q-1)(q-2)}{3!q} = \frac{1}{3}q^{2}-q+\frac{2}{3},\\
P_{4}(q) &= -4\sigma_{4}-\sigma_{2}P_{2}(q) = -4(-1)^{2}\frac{1}{q}\binom{q}{5}-(-1)^{1}\frac{1}{q}\binom{q}{3}(\tfrac{1}{3}q^{2}-q+\tfrac{2}{3})\\
&=\frac{1}{45}q^{4}-\frac{4}{9}q^{2}+q-\frac{26}{45},
\end{align*}
etcetera. This allows us to calculate $C_{\sigma}$ with rational arithmetic alone.

\section{Closed expression}\label{Sec:Closed}

To obtain a closed expression, we use a result by Liu and Xin \cite[Corollary~4.12]{LiuXin}.
They define coefficients $r_{q,t}$ by
\[
 \left(\frac{e^s+1}{e^s-1}\right)^q
 =2^q\sum_{t\ge0}r_{q,t}s^{2t-q},
\]
so their alternating cotangent identity implies the following.

\begin{proposition}[Closed cotangent block]\label{prop:Cclosed}
For every even $\sigma$,
\begin{equation}\label{eq:Cclosed}
 C_\sigma=-2^\sigma\sum_{t=0}^{\sigma/2}
 r_{\sigma,t}\,
 \frac{B_{\sigma-2t}(1/2)}{(\sigma-2t)!}\,
 \sigma^{\sigma-2t},
\end{equation}
where $B_{j}(x)$ is the $j$-th Bernoulli polynomial, defined by
\[
 \frac{xe^{tx}}{e^{x}-1} = \sum_{j=0}^{\infty}B_{j}(t)\frac{x^{j}}{j!}.
\]
\end{proposition}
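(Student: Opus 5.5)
The plan is to prove \eqref{eq:Cclosed} by a residue computation, which should reproduce the mechanism behind Liu and Xin's alternating identity. Write $\sigma=2m$ and consider
\[
 F(z)=\left(i\cot(\pi z)\right)^{\sigma}\cdot g(z),
\]
where $g$ is a kernel with simple poles of residue $(-1)^r$ at the points $z=r/\sigma$. The natural choice is $g(z)=\pi\csc(\pi\sigma z)/\pi$ up to normalisation: $\sigma\pi\csc(\sigma\pi z)$ has residue $(-1)^r$ at $z=r/\sigma$. Substituting $w=\sigma z$, I would instead take
\[
 H(w)=\left(i\cot\frac{\pi w}{\sigma}\right)^{\sigma}\pi\csc(\pi w),
\]
which is periodic of period $\sigma$ up to the sign $(-1)^\sigma=1$. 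Its poles in a period strip are:
\begin{itemize}
\item the simple poles at $w=r$, $1\le r\le\sigma-1$, with residues $(-1)^r(i\cot(\pi r/\sigma))^\sigma$, whose sum is exactly $C_\sigma$;
\item a single pole of order $\sigma+1$ at $w=0$.
\end{itemize}

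The first step is the vanishing of the total residue over a period. Integrating $H$ around the boundary of the rectangle $0<\operatorname{Re}w<\sigma$ (shifted slightly to avoid the poles), the vertical sides cancel by periodicity. On the horizontal sides, as $\operatorname{Im}w\to\pm\infty$, $\cot\to\mp i$ stays bounded while $\csc(\pi w)\to0$ exponentially, so those contributions tend to zero. Hence
\[
 C_\sigma=-\operatorname{Res}_{w=0}H.
\]

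The second step computes this residue with the generating function. Since $i\cot(\pi w/\sigma)=\frac{e^{s}+1}{e^{s}-1}$ with $s=2\pi i w/\sigma$, the definition of $r_{q,t}$ gives
\[
 \left(i\cot\frac{\pi w}{\sigma}\right)^\sigma=2^\sigma\sum_t r_{\sigma,t}\left(\frac{2\pi i w}{\sigma}\right)^{2t-\sigma}.
\]
On the other side, $\pi\csc(\pi w)=2\pi i\,e^{\pi i w}/(e^{2\pi i w}-1)$. Writing $x=2\pi i w$, this equals
\[
 \frac{2\pi i}{x}\sum_j B_j(1/2)\frac{x^j}{j!},
\]
using the Bernoulli generating function with $t=1/2$.

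The coefficient of $w^{-1}$ pairs index $t$ with $j=\sigma-2t$, because only even $j$ contribute ($B_{\mathrm{odd}}(1/2)=0$, consistent with the parity). Extracting it, the powers of $2\pi i$ cancel:
\[
 (2\pi i)^{2t-\sigma}\cdot 2\pi i\cdot(2\pi i)^{\sigma-2t-1}=1,
\]
which leaves $\sigma^{\sigma-2t}$ from $\sigma^{-(2t-\sigma)}$. Therefore
\[
 \operatorname{Res}_0H=2^\sigma\sum_{t=0}^{m} r_{\sigma,t}\frac{B_{\sigma-2t}(1/2)}{(\sigma-2t)!}\sigma^{\sigma-2t},
\]
and the first step gives \eqref{eq:Cclosed}. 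The range $t\le m$ is forced by $j\ge0$.

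The main obstacle is bookkeeping rather than ideas: confirming that the residues at $w=r$ carry exactly the sign $(-1)^r$ with no extra factors, and that the boundary terms vanish. The decay is clean because the cotangent power is bounded on horizontal lines. Alternatively, one can simply cite \cite[Corollary~4.12]{LiuXin} with $q=\sigma$ and alternating weight, checking that their normalisation matches $r_{\sigma,t}$ as defined. The residue argument above is the independent verification of that specialisation.
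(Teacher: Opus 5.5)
Your proof is correct, but it takes a different route from the paper. The paper does not prove the identity itself. It quotes \cite[Corollary~4.12]{LiuXin} in the form $\sum_{r=1}^{\sigma-1}(-1)^r\cot^{2m}\frac{\pi r}{\sigma}=(-1)^{m+1}2^{2m}\sum_{t=0}^{m}r_{2m,t}\frac{B_{2(m-t)}(1/2)}{(2(m-t))!}\sigma^{2(m-t)}$ and multiplies by $i^{2m}=(-1)^m$.

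You instead give a self-contained residue proof by integrating the $\sigma$-periodic function $H(w)=(i\cot\frac{\pi w}{\sigma})^{\sigma}\,\pi\csc(\pi w)$ around a period rectangle. The steps all hold up:
\begin{enumerate}
\item The residues at $w=r$, $1\le r\le\sigma-1$, are $(-1)^r(i\cot\frac{\pi r}{\sigma})^{\sigma}$. At $r=\sigma/2$ the singularity is removable, which is consistent with the value $0$.
\item The vertical sides cancel by periodicity, using that $\sigma$ is even.
\item The horizontal sides vanish because the cotangent stays bounded while $\csc(\pi w)$ decays exponentially.
\item The coefficient of $w^{-1}$ at $0$ comes from pairing $s^{2t-\sigma}$ with $x^{\sigma-2t}$, and the powers of $2\pi i$ cancel exactly as you say.
\end{enumerate}

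There is one small slip. With $s=2\pi i w/\sigma$ one has $\frac{e^s+1}{e^s-1}=-i\cot\frac{\pi w}{\sigma}$, not $+i\cot\frac{\pi w}{\sigma}$. Because $\sigma$ is even the $\sigma$-th powers agree, so nothing downstream changes. You should still state the identity for the $\sigma$-th power, or take $s=-2\pi i w/\sigma$.

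Your route buys independence from the Liu--Xin preprint. It also checks directly that their normalisation of $r_{q,t}$ and the sign $(-1)^{m+1}$ match the statement; for instance it gives $C_2=0$ and $C_4=-2$, with $r_{4,t}=1,\tfrac13,\tfrac{13}{360}$. The paper's route is a one-line specialisation, but it rests entirely on the cited corollary and on the normalisations being transcribed correctly.
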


\begin{proof}
Put $\sigma=2m$.  Liu and Xin prove
\[
 \sum_{r=1}^{\sigma-1}(-1)^r\cot^{2m}\frac{\pi r}{\sigma}
 =(-1)^{m+1}2^{2m}\sum_{t=0}^{m}
 r_{2m,t}\frac{B_{2(m-t)}(1/2)}{(2(m-t))!}
 \sigma^{2(m-t)}.
\]
Multiplication by $i^{2m}=(-1)^m$ yields \eqref{eq:Cclosed}.
\end{proof}

Combining Theorem~\ref{thm:reduction} and
Proposition~\ref{prop:Cclosed} gives the promised explicit evaluation.

\begin{corollary}[Closed formula for the affine Wang--Sun sum]
For even $n$,
\begin{equation}\label{eq:final}
 S_n=-\sum_{\substack{\sigma\backslash n\\\sigma\ \mathrm{even}}}
 \epsilon_n(\sigma)N_n(\sigma)\frac n\sigma\left([2\backslash \tfrac{n}{\sigma}]+[2\centernot\backslash\tfrac{n}{\sigma}]2^\sigma\sum_{t=0}^{\sigma/2}
 r_{\sigma,t}\frac{B_{\sigma-2t}(1/2)}{(\sigma-2t)!}\sigma^{\sigma-2t}\right).
\end{equation}
\end{corollary}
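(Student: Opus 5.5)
The plan is a direct substitution of Proposition~\ref{prop:Cclosed} into the divisor formula of Theorem~\ref{thm:reduction}. No new arithmetic input is needed. First I will record that the block $J_{n,\sigma}$ from \eqref{eq:Jnd} can be written uniformly with Iverson brackets:
\[
 J_{n,\sigma}=-[2\backslash \tfrac{n}{\sigma}]+[2\centernot\backslash\tfrac{n}{\sigma}]\,C_\sigma ,
\]
where $C_\sigma$ is exactly the alternating cotangent block of Section~\ref{Sec:cotangent}. This holds because the second line of \eqref{eq:Jnd} is literally the definition of $C_\sigma$, and in \eqref{eq:divisor-formula} every $\sigma$ is even.

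Next I will invoke Proposition~\ref{prop:Cclosed}, which applies since $\sigma$ is even. It gives
\[
C_\sigma=-2^\sigma\sum_{t=0}^{\sigma/2} r_{\sigma,t}\frac{B_{\sigma-2t}(1/2)}{(\sigma-2t)!}\sigma^{\sigma-2t}.
\]
Both summands of $J_{n,\sigma}$ then carry an overall factor $-1$, which I will pull out:
\[
J_{n,\sigma}=-\Bigl([2\backslash \tfrac{n}{\sigma}]+[2\centernot\backslash\tfrac{n}{\sigma}]\,2^\sigma\sum_{t=0}^{\sigma/2} r_{\sigma,t}\tfrac{B_{\sigma-2t}(1/2)}{(\sigma-2t)!}\sigma^{\sigma-2t}\Bigr).
\]

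Finally I will insert this into \eqref{eq:divisor-formula}. The factors $\epsilon_n(\sigma)N_n(\sigma)\frac n\sigma$ are unchanged, and factoring the common minus sign out of the sum over even divisors $\sigma$ yields \eqref{eq:final}.

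The only point needing care is the sign bookkeeping. I must check that the $-1$ in the case $n/\sigma$ even and the $-2^\sigma$ in \eqref{eq:Cclosed} combine into a single global minus sign, with the Iverson brackets selecting the correct term. I will also note that the cases are mutually exclusive, so no cross terms arise.

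I do not expect a genuine obstacle, since the statement is a formal combination of two earlier results. If anything, the thing to double-check is that the normalisation of Liu and Xin's $r_{q,t}$ (the power $2^q$) was already absorbed correctly in Proposition~\ref{prop:Cclosed}. I will take that proposition as given.
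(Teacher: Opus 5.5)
Your proposal is correct and is essentially the paper's own argument. The paper obtains the corollary simply by combining Theorem~\ref{thm:reduction} with Proposition~\ref{prop:Cclosed}, and your rewriting $J_{n,\sigma}=-[2\backslash \tfrac{n}{\sigma}]+[2\centernot\backslash\tfrac{n}{\sigma}]\,C_\sigma$, followed by extracting the common minus sign, is exactly the sign bookkeeping that combination requires.
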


\section{Some patterns of the sum}\label{Sec:Patterns}

\begin{corollary}[Integer values]
If $\ell\geq 1$, then $S_{2^{\ell}}$ is an integer.
\end{corollary}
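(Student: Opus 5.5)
For $n=2^{\ell}$ we apply Theorem~\ref{thm:reduction} directly. The even divisors of $n$ are $\sigma=2^{e}$ with $1\le e\le \ell$, and $n/\sigma=2^{\ell-e}$ is odd only when $\sigma=n$.

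\emph{The terms with $\sigma<n$.} Here $n/\sigma$ is even, so $J_{n,\sigma}=-1$. Also $\epsilon_n(\sigma)=\pm1$, and $N_n(\sigma)$ and $n/\sigma$ are integers by Proposition~\ref{prop:local-counts}. Each such term is therefore an integer, and
\[
S_{2^\ell}=\sum_{e=1}^{\ell-1}\epsilon_n(2^e)N_{2^\ell}(2^e)\,2^{\ell-e}\,(-1)\;+\;\epsilon_n(n)N_n(n)\,C_n .
\]

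\emph{The term with $\sigma=n$.} Here $N_n(n)=1$ (only $v=1$) and $\epsilon_n(n)=1$ by Remark~\ref{rem:2part}. So it remains to show that $C_{n}$ is an integer when $n=2^\ell$.

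\emph{Reduction of $C_n$.} By Section~\ref{Sec:cotangent}, with $n=2m$,
\[
C_n=(-1)^m\bigl(2P_{2m}(m)-P_{2m}(2m)\bigr).
\]
Each $P_{2m}(q)$ is the power sum $\sum x_r^{2m}$ over the roots of $p_q$. These roots are algebraic, so $P_{2m}(q)$ is rational, but not obviously integral: $p_q/q$ is monic with coefficients $\sigma_j=\pm\binom{q}{j+1}/q$. This is the crux.

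\emph{Integrality of the coefficients.} For $q$ a power of $2$ and $j$ even, $j+1$ is odd, and I claim $\binom{q}{j+1}/q\in\Z$. Indeed
\[
\binom{q}{k}=\frac{q}{k}\binom{q-1}{k-1},
\]
so $\binom{q}{k}/q=\binom{q-1}{k-1}/k$. Since $k$ is odd, $k\perp q$, and $\gcd(k,q)=1$ gives $k\backslash\binom{q-1}{k-1}$ from $k\binom{q}{k}=q\binom{q-1}{k-1}$. Hence the monic polynomial $p_q(x)/q$ has integer coefficients for $q=2^{j}$. Its roots are then algebraic integers, and their power sums are rational algebraic integers, hence integers. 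Equivalently, the Newton–Girard recursion with integral $\sigma_j$ and the integral leading term $-2m\sigma_{2m}$ produces integers inductively.

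\emph{Conclusion.} Apply this with $q=m=2^{\ell-1}$ and $q=2m=2^\ell$. Both $P_{2m}(m)$ and $P_{2m}(2m)$ are then integers, so $C_n\in\Z$, and therefore $S_{2^\ell}\in\Z$. The case $m=1$ ($n=2$) is trivial: $C_2=(i\cot(\pi/2))^2\cdot(-1)=0$, and $P_{2}(1)=0$ as an empty sum.

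The main obstacle is the integrality of $P_{2m}(q)$. This requires the divisibility $q\backslash\binom{q}{j+1}$ for odd $j+1$ and $q$ a power of two, which fails for general $q$. That failure is exactly why the statement is restricted to $n=2^\ell$.
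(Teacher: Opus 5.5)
Your proposal is correct and follows essentially the same route as the paper. You isolate the single cotangent block $\sigma=n$ via Theorem~\ref{thm:reduction} and express $C_n$ through the power sums $P_{2m}(q)$ with $q\in\{2^{\ell-1},2^{\ell}\}$; the identity $k\binom{q}{k}=q\binom{q-1}{k-1}$ for odd $k$ then makes the monic coefficients integral, and Newton--Girard (or, equivalently, your rational-algebraic-integer remark) gives integers.
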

\begin{proof}
The values of $\sigma_{j} = [2\backslash j](-1)^{j/2}\frac{1}{q}\binom{q}{j+1}$ are integers. Indeed, suppose $q$ is a power
of $2$ and $\sigma_{j}\neq 0$. Then $j+1$ is odd and the recursion
\[
(j+1)\binom{q}{j+1}=q\binom{q-1}{j},
\]
together with the fact that $(j+1)\perp q$, gives us $q\backslash \binom{q}{j+1}$. By the Newton-Girard recursion, $C_{2^{\ell}}$ is a sum of integers, which was the only possible non-integral summand, for $\frac{2^{\ell}}{2^{\ell}}=1$ is odd.
\end{proof}

\begin{theorem}[Prime denominators]
If $p>3$ is a prime, then $S_{2p}$ has denominator $p$.
\end{theorem}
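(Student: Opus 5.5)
The plan is to specialise Theorem~\ref{thm:reduction} to $n=2p$ and reduce the statement to an arithmetic property of a single cotangent power sum. The even divisors of $2p$ are $\sigma=2$ and $\sigma=2p$. For $\sigma=2$ we have $n/\sigma=p$ odd, so $J_{2p,2}=C_2=-(i\cot\frac{\pi}{2})^2=0$ and this term drops out. For $\sigma=2p$ we have $n/\sigma=1$, $N_{2p}(2p)=1$, and $\epsilon_{2p}(2p)=1$ by Remark~\ref{rem:2part}. Hence
\[
 S_{2p}=C_{2p}=(-1)^p\bigl(2P_{2p}(p)-P_{2p}(2p)\bigr)=P_{2p}(2p)-2P_{2p}(p).
\]
It therefore suffices to show that this rational number has exact denominator $p$.

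Next I would split $P_{2p}(2p)$ by the parity of $r$. The even indices $r=2s$ give exactly $P_{2p}(p)$, and $r=p$ contributes $\cot^{2p}(\pi/2)=0$. The remaining odd $r\neq p$ give a sum $Q$. Writing $\zeta_{2p}^r=-\zeta_p^s$ with $s\not\equiv0\pmod p$, each such term is $\bigl(i(1-\zeta_p^s)/(1+\zeta_p^s)\bigr)^{2p}$ up to sign conventions. Since $1+\zeta_p^s$ is a unit in $\Z[\zeta_p]$, every term of $Q$ is an algebraic integer. The terms of $Q$ are permuted by $\mathrm{Gal}(\mathbb{Q}(\zeta_p)/\mathbb{Q})$, so $Q$ is a rational algebraic integer, that is, an integer. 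This gives $S_{2p}=Q-P_{2p}(p)$, and it remains to prove that $P_{2p}(p)$ has denominator exactly $p$.

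For $P_{2p}(p)=\sum_{r=1}^{p-1}\cot^{2p}(\pi r/p)$, I would pass to $y=x^2$. By the Cvijovi\'c--Klinowski proposition, the values $y_r=\cot^2(\pi r/p)$ for $1\le r\le (p-1)/2$ are the roots of a monic polynomial of degree $d=(p-1)/2$. Its elementary symmetric functions are $e_k=\pm\frac1p\binom{p}{2k+1}$, which are integers for $k<d$, while $e_d=\pm\frac1p$. Then $P_{2p}(p)=2\sum_r y_r^{p}$ is twice the $p$-th power sum. Newton's identities express it as an integer polynomial in $e_1,\dots,e_d$, and all the $e_k$ with $k<d$ are divisible by $p$. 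In each monomial, a factor $e_d^{j}$ contributes $p^{-j}$ and any other factor contributes at least one power of $p$. Since the total weight is $p$, a monomial $e_d^{j}\prod e_{k_i}$ has $jd+\sum k_i=p$. For $p>3$ this forces $j\le2$, and $j=2$ leaves weight $1$, which requires a factor $e_1$ divisible by $p$. The $p$-adic valuation of each monomial is therefore at least $-1$, so $pP_{2p}(p)\in\Z$.

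The main obstacle is exactness: showing that $pP_{2p}(p)\not\equiv0\pmod p$. For this I would isolate the monomials of valuation exactly $-1$. These are $e_d\cdot(\text{a monomial of weight }d+1\text{ in }e_1,\dots,e_{d-1})$ containing exactly one factor divisible by $p$, together with $e_d^2e_1$ when $2d+1=p$. I would then compute their sum modulo $p$ using $\frac1p\binom{p}{2k+1}\equiv\frac{(-1)^{2k}}{2k+1}=\frac{1}{2k+1}\pmod p$. If this bookkeeping becomes unwieldy, the fallback is to write $\cot^2(\pi r/p)=-(2-\lambda)^2\lambda^{-2}$ with $\lambda=1-\zeta_p^r$ and compute the trace directly. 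The different of $\mathbb{Q}(\zeta_p)$ is $(\lambda)^{p-2}$, so only the coefficients of $\lambda^{-k}$ with $k\ge p-1$ can produce denominators. Expanding $(2-\lambda)^{2p}$ then identifies the $p^{-1}$ part explicitly as a nonzero residue, which proves that $S_{2p}$ has denominator exactly $p$.
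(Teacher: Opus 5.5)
Your reduction to $S_{2p}=C_{2p}=Q-P_{2p}(p)$ is fine, and so is your proof that $Q\in\Z$: the units $1+\zeta_p^s$ together with Galois invariance do the job of the paper's monic polynomial $G_p$. Working with $y_r=\cot^2(\pi r/p)$, $e_k=\frac1p\binom{p}{2k+1}$ and $e_d=\frac1p$ is also a legitimate variant of the paper's setup.

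The gap is the valuation step. You claim that the $e_k$ with $k<d$ are divisible by $p$, but they are $p$-adic units: $e_k=\frac1p\binom{p}{2k+1}\equiv\frac1{2k+1}\not\equiv0\pmod p$, which is the very congruence you invoke later. For instance $e_1=\frac{(p-1)(p-2)}6\equiv\frac13\pmod p$, and for $p=5$ one has $e_1=2$, $e_2=\frac15$. With the true valuations, your accounting gives $e_d^2e_1$ valuation $-2$ and every $e_d\prod e_{k_i}$ valuation $-1$. So neither $pP_{2p}(p)\in\Z$ nor your list of the terms of valuation exactly $-1$ follows. That list is inconsistent even under your premise, since a monomial of weight $d+1$ in $e_1,\dots,e_{d-1}$ has at least two factors.

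The missing idea is that the factor $p$ sits in the Newton--Girard coefficients, not in the $e_k$. By Waring's formula, the coefficient of $\prod_i e_i^{m_i}$ in $\sum_r y_r^p$ is $(-1)^{p-M}\,p\,(M-1)!/\prod_i m_i!$ with $M=\sum_i m_i$. This is divisible by $p$ for every monomial except $e_1^p$. Consequently:

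\begin{itemize}
\item every monomial containing $e_d$ exactly once is $p$-integral;
\item the only non-integral term is $p\,e_d^2e_1=e_1/p$.
\end{itemize}

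Thus $P_{2p}(p)\equiv 2e_1/p\pmod{\Z}$ and $pS_{2p}\equiv-2e_1\equiv-\frac23\pmod p$, which is nonzero for $p>3$. For $p=5$ this reads $\sum y_r^5=e_1^5-5e_1^3e_2+5e_1e_2^2=\frac{122}5$.

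The paper obtains the same residue from $\log E=\log B+\log\bigl(1+t^m/(pB)\bigr)$. There the factor $2p$ in $A_p=-2p[t^{2p}]\log E$ and the integrality of $r[t^r]\log B$ do exactly this job.

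Your fallback via the different does not avoid the issue. A priori $\mathrm{Tr}(\lambda^{-2p})$ lies only in $p^{-2}\Z$, so before reading off a $p^{-1}$ residue you must show the $p^{-2}$ parts vanish. That again comes down to the same divisibility of the Waring coefficients.
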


\begin{proof}
The only two even divisors of $2p$ are $2$ and $2p$. For $\sigma=2$ we have $n/2 = p$ is odd and $C_{2}=0$, while
for $\sigma = 2p$ we have $N_{2p}(\sigma)=1$, $\epsilon_{2p}(\sigma)=1$ and $n/\sigma = 1$, and thus $S_{n}=C_{2p}$.

Let $\zeta=\exp(\pi i/p)$, $\omega = \zeta^{2}$,
\[
 \alpha_{j}=\frac{1+\omega^{j}}{1-\omega^{j}} = i\cot\frac{\pi j}{p}
\]
for $1\leq j\leq p-1$ and
\[
\beta_{r}=\frac{1+\zeta^{r}}{1-\zeta^{r}}= i\cot\frac{\pi r}{2p}
\]
for $r$ odd.

We can write
\[
C_{2p} = A_{p}-B_{p}
\]
where
\[
 A_{p} = \sum_{j=1}^{p-1}\alpha_{j}^{2p}\quad\text{and}\quad B_{p} = \sum_{r=1}^{2p-1}[2\centernot\backslash r]\beta^{2p}_{r}.
\]

The $\alpha_{j}$ are the roots of the polynomial
\[
 F_{p}(x)=\frac{(x+1)^{p}-(x-1)^{p}}{2p}
\]
which expands as
\[
F_{p}(x) = x^{p-1}+\frac{1}{p}\binom{p}{3}x^{p-3}+\frac{1}{p}\binom{p}{5}x^{p-5}+\cdots+\frac{1}{p}.
\]

Note that all coefficients are integers except the last one since $\frac{1}{p}\binom{p}{j}$ is an integer, for the
numerators of $\binom{p}{j}$ are divisible by $p$ while their denominators are not.

On the other hand, for $r$ odd, $(\zeta^{r})^{p} = -1$ and $\zeta^{r}=\frac{\beta_{r}-1}{\beta_{r}+1}$, hence
$(\beta_{r}-1)^{p} = -(\beta_{r}+1)^{p}$, and thus the $p$ numbers $\beta_{r}$ are the roots of
\[
 G_{p}(x) = \frac{(x+1)^{p}+(x-1)^{p}}{2}.
\]

Newton-Girard formulas now ensure $B_{p}$ is an integer, so we can focus on $A_{p}$. If $e_{i}$ is
the $i$-th elementary symmetric function on $m$ variables and $p_{r}=\sum_{j}x_{j}^{r}$ are the power sums, then
\[
 E(t) = \prod_{j=1}^{m}(1+x_{j}t) = \sum_{r=0}^{m}e_{r}t^{r}
\]
(see \cite[pp. 19--23]{Macdonald}) and, in particular,
\begin{align*}
\log E(t) &= \sum_{j=1}^{m}\log(1+x_{j}t)\\
&= \sum_{j=1}^{m}\sum_{r=1}^{\infty} (-1)^{r-1}x_{j}^{r}\frac{t^{r}}{r}\\
&= \sum_{r=1}^{\infty}(-1)^{r-1}\left(\sum_{j=1}^{m}x_{j}^{r}\right)\frac{t^{r}}{r}\\
&= \sum_{r=1}^{\infty}(-1)^{r-1}p_{r}\frac{t^{r}}{r}.
\end{align*}

From the expression for $F_{p}$ we have
\[
e_{j} = [2\backslash j]\frac{1}{p}\binom{p}{j+1},
\]
and, in particular,
\[
e_{2} = \frac{1}{p}\binom{p}{3}
\]
is an integer. We can write
\[
 E(t) = B(t) + \frac{t^{m}}{p} = B(t)\left(1+\frac{t^{m}}{pB(t)}\right),
\]
and note that $B(t) \in \mathbb{Z}[t]$ and $B(0) = 1$. Taking logarithms we get
\[
\log E(t) = \log B(t) + \log\left(1+\frac{t^{m}}{pB(t)}\right),
\]
and note that for $F_{p}$ we have $m=p-1$ roots, and thus for $p\geq 5$ we have
\[
 3m = 3(p-1) > 2p = 2m+2.
\]

Since
\[
\log\left(1+\frac{t^{m}}{pB(t)}\right) = \frac{t^{m}}{pB(t)}-\frac{t^{2m}}{2p^{2} (B(t))^{2}} +\cdots
\]
then the terms $\frac{t^{jm}}{jp^{j}B(t)^{j}}$ for $j\geq 3$ do not contribute to the coefficient of $t^{2p}$, since $3m>2p$. Therefore
\[
\frac{A_{p}}{-2p}=[t^{2p}]\log E = [t^{2p}]\log B + \frac{1}{p}[t^{m+2}]B^{-1}-\frac{1}{2p^{2}}[t^{2}]B^{-2}.
\]

A term of the form
\[
 r[t^{r}]\log B =  [t^{r-1}]\frac{B'(t)}{B(t)}
\]
is an integer since the derivative of a polynomial with integer coefficients has integer coefficients and the
formal inverse of $B(t)$ has integer coefficients because $B(0) = 1$. In particular, $2p[t^{2p}]\log(B)$ is an integer. From
this we also deduce that
\[
2p\frac{1}{p}[t^{m+2}]B^{-1} = 2[t^{m+2}]B^{-1}
\]
is an integer. For the last term, we have
\begin{align*}
B(t) &= 1+e_{2}t^{2}+O(t^{4}),\\
B(t)^{-1} &= 1-e_{2}t^{2} + O(t^{4}),\\
B(t)^{-2} &= 1-2e_{2}t^{2}+O(t^{4}),
\end{align*}
which implies that
\[
-\frac{1}{2p^{2}}[t^{2}]B^{-2} = \frac{e_{2}}{p^{2}}.
\]
and thus
\[
A_{p} = -2p[t^{2p}]\log E(t) \equiv -\frac{2e_{2}}{p} \pmod{\mathbb{Z}}.
\]

Finally
\[
 e_{2} = \frac{1}{p}\binom{p}{3} = \frac{(p-1)(p-2)}{6} \equiv \frac{1}{3}\pmod{p},
\]
which means that $pA_{p} \equiv -2e_{2} \equiv -\frac{2}{3}\pmod{p}$. Since this is non-zero,
then the reduced denominator of $A_{p}$ is exactly $p$. Consequently the reduced denominator of $C_{2p}$
is exactly $p$.
\end{proof}

The alternating sign pattern of the sum that is evident from the first terms is also prevalent.

\begin{lemma}\label{L:Signo}
For $m\geq 2$
\[
\sign(C_{2m}) = (-1)^{m+1}
\]
and $C_{2}=0$.
\end{lemma}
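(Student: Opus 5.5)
The plan is to avoid the closed formula of Proposition~\ref{prop:Cclosed} and instead compare the even-indexed and odd-indexed terms of $C_{2m}$ directly, using monotonicity of $|\cot|$. We start from the decomposition obtained in Section~\ref{Sec:cotangent},
\[
 C_{2m}=(-1)^m\bigl(2P_{2m}(m)-P_{2m}(2m)\bigr).
\]
Write $E=\sum_{k=1}^{m-1}\cot^{2m}\frac{2k\pi}{2m}$ for the even-indexed terms and $O=\sum_{k=1}^{m}\cot^{2m}\frac{(2k-1)\pi}{2m}$ for the odd-indexed terms of $P_{2m}(2m)$. Since $E=P_{2m}(m)$ and $P_{2m}(2m)=E+O$, this gives
\[
 C_{2m}=(-1)^m(E-O).
\]
So it suffices to prove two things: $O=0$ when $m=1$, and $O>E$ strictly when $m\ge2$.

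\textbf{The case $m=1$.} Here $E$ is the empty sum and $O=\cot^2(\pi/2)=0$, so $C_2=0$.

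\textbf{The case $m\ge2$.} For $\theta\in(0,\pi)$ let $d(\theta)=\min(\theta,\pi-\theta)$. Since $\cot^{2m}$ is even under $\theta\mapsto\pi-\theta$, we have $\cot^{2m}\theta=\cot^{2m}d(\theta)$. Also $\cot^{2m}$ is strictly decreasing on $(0,\pi/2]$. Hence $\cot^{2m}\theta>\cot^{2m}\theta'$ whenever $d(\theta)<d(\theta')$.

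Next I would build an injection from even indices to odd indices that strictly decreases $d$. Send the even index $2k$ (with $1\le k\le m-1$) to $2k-1$ if $2k\le m$, and to $2k+1$ if $2k>m$.
\begin{itemize}
\item In the first case the angle moves toward $0$ and stays in $(0,\pi/2]$, so $d$ drops by $\pi/2m$.
\item In the second case $2k\ge m+1$, so the angle moves toward $\pi$ and stays in $[\pi/2,\pi)$. Again $d$ drops by $\pi/2m$, and $2k+1\le 2m-1$ keeps the index in range.
\end{itemize}
The images in the first case are all $\le m-1$ and those in the second case are all $\ge m+2$, so the map is injective.

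Summing the strict termwise inequalities over the $m-1\ge1$ even indices gives $E<\sum_{\text{image}}(\text{odd terms})\le O$. The last inequality holds because the remaining odd terms are nonnegative. Hence $E-O<0$, and therefore $\sign(C_{2m})=(-1)^{m+1}$.

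The only delicate point is checking that this pairing really is injective and stays within $1\le r\le 2m-1$, since the two halves of the index range are reflected differently. The explicit image ranges $\le m-1$ and $\ge m+2$ settle this. The key conceptual step is recognising $2P_{2m}(m)-P_{2m}(2m)$ as $E-O$, the even-indexed minus the odd-indexed part of $P_{2m}(2m)$.
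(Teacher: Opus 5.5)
Your proof is correct and is essentially the paper's argument: the paper folds the sum via $r\leftrightarrow 2m-r$ (same parity, same value of $\cot^{2m}$) to get $C_{2m}=2(-1)^m\sum_{r=1}^{m-1}(-1)^r\cot^{2m}\frac{\pi r}{2m}$, then invokes the sign of an alternating sum with strictly decreasing terms, which amounts to your pairing $2k\mapsto 2k-1$ on the first half. Your $d$-based injection carries out the same comparison (symmetry about $\pi/2$ plus monotonicity of $\cot$ on $(0,\pi/2]$) on the unfolded range, and your pairing $2k\mapsto 2k+1$ on the second half is the mirror image of that same pairing.
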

\begin{proof}
Since $i^{2m}=(-1)^{m}$, we can pair the terms $r$ and $2m-r$ in $C_{2m}$. The middle term $r=m$
vanishes because $\cot(\pi/2)=0$. Therefore
\[
 C_{2m} = 2(-1)^{m}\sum_{r=1}^{m-1}(-1)^{r}\left(\cot\frac{\pi r}{2m}\right)^{2m}
\]
and the terms
\[
a_{r} = \left(\cot\frac{\pi r}{2m}\right)^{2m}
\]
are strictly decreasing for $1\leq r \leq m-1$ because $\cot(x)$ is strictly decreasing on $(0,\pi/2)$. By
the properties of alternating sums, its sign is negative, and multiplying by $(-1)^{m}$ gives the result.
\end{proof}

\begin{theorem}[Alternating signs] For every even integer $n\geq 6$
\[
\sign(S_{n}) = (-1)^{n/2+1}.
\]
\end{theorem}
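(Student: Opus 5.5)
The plan is to read off the sign from the divisor formula of Theorem~\ref{thm:reduction}. I will show that every cotangent block appearing there has the sign $(-1)^{n/2+1}$, and that the single block $\sigma=n$ is so large that it outweighs all the elementary terms $J_{n,\sigma}=-1$ together.

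\textbf{Step 1 (cotangent blocks all share one sign).} Write $n/2=(\sigma/2)\cdot(n/\sigma)$. If $n/\sigma$ is odd, then $\sigma/2$ and $n/2$ have the same parity. So Lemma~\ref{L:Signo} gives $\sign(C_\sigma)=(-1)^{\sigma/2+1}=(-1)^{n/2+1}$ whenever $\sigma\geq4$, while $C_2=0$ contributes nothing. By Remark~\ref{rem:2part}, $\epsilon_n(\sigma)=1$ in this case. Since $N_n(\sigma)\,n/\sigma>0$, the whole cotangent part
\[
 T=\sum_{\substack{\sigma\backslash n,\ \sigma \text{ even}\\ n/\sigma\ \mathrm{odd}}} N_n(\sigma)\frac{n}{\sigma}C_\sigma
\]
is a sum of terms of sign $(-1)^{n/2+1}$. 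It contains the term $\sigma=n$, which equals $C_n$ because $N_n(n)=1$. Hence $|T|\geq |C_n|$. If $n/2$ is odd, there are no elementary terms at all, since $n/\sigma$ even would force $4\backslash n$. So for $n\equiv 2\pmod 4$ we are done once $C_n\neq0$, which holds for $n\geq6$.

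\textbf{Step 2 (bounding the elementary part).} For $4\backslash n$, the remaining part is $E=-\sum \epsilon_n(\sigma)N_n(\sigma)\,n/\sigma$ over even $\sigma$ with $n/\sigma$ even. Every such $\sigma$ satisfies $n/\sigma\leq n/2$. Also $\sum_\sigma N_n(\sigma)\leq\varphi(n)$, since the $N_n(\sigma)$ count disjoint sets of units. Therefore
\[
 |E|\leq \frac{n}{2}\varphi(n)<\frac{n^2}{2}.
\]

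\textbf{Step 3 (lower bound for $|C_n|$; the main point).} From the proof of Lemma~\ref{L:Signo},
\[
 |C_n|=2\Bigl|\sum_{r=1}^{n/2-1}(-1)^r a_r\Bigr|,\qquad a_r=\cot^n\frac{\pi r}{n},
\]
with $a_r$ strictly decreasing. The alternating-series bound gives $|C_n|\geq 2(a_1-a_2)$. The duplication formula $\cot 2x=(\cot^2x-1)/(2\cot x)<\tfrac12\cot x$ yields $a_2<2^{-n}a_1$, so $|C_n|\geq 2(1-2^{-n})\cot^n(\pi/n)$. Because $\cot(\pi/n)$ increases with $n$, for $n\geq8$ we have $\cot(\pi/n)\geq\cot(\pi/8)=1+\sqrt2>2$. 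Thus $|C_n|>2^n\geq n^2/2>|E|$.

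It follows that $S_n=T+E$ has the sign of $T$, namely $(-1)^{n/2+1}$. The delicate points are making the alternating-series estimate rigorous when there is only one term (then $a_2$ is absent, which only helps) and checking the small cases $n=6,8$ directly. For $n=8$, $|C_8|\geq 2\cot^8(\pi/8)\cdot(1-2^{-8})\approx 2300$, far larger than $|E|\leq 16$.
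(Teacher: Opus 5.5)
Your proof is correct and follows essentially the paper's route. It uses the divisor formula, the common sign $(-1)^{n/2+1}$ of all cotangent blocks from Lemma~\ref{L:Signo} and Remark~\ref{rem:2part}, dominance of the block $\sigma=n$, and the same lower bound $|C_n|\ge 2(a_1-a_2)$ via the duplication formula and $\cot(\pi/8)>2$. Beyond that you make only two small simplifications. You treat both residue classes of $n$ modulo $4$ at once through $n/2=(\sigma/2)(n/\sigma)$. You also replace the paper's exact evaluation $E_n=\frac{2^{2a-2}+2}{3}\mathcal{A}(q)\le n^2/4$ of the elementary part with the cruder bound $|E|\le \frac{n}{2}\varphi(n)$, which suffices and avoids tracking the signs $\epsilon_n(\sigma)$.
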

\begin{proof}
When $n=2q$ with $q$ odd, every even divisor is of the form $\sigma = 2f$ with $f\backslash q$, which
means that $n/\sigma = q/f$ is odd. Hence only cotangent sums contribute to $S_{n}$. Furthermore,
$\epsilon_{n}(\sigma) = (-1)^{\frac{\sigma\cdot (n/2-1)}{2}}= 1$ since $n/2 = q$ is odd. By Lemma \ref{L:Signo}
\[
\sign(C_{2f})=(-1)^{f+1} = 1
\]
since $f>1$ is odd. For the $f=1$ term we have $C_{2}=0$, and thus all summands of $S_{n}$ are non-negative
when $n=2q\geq 6$. Since $N_{2q}(2q)=1$ and $C_{2q}>0$, it follows that $S_{2q}>0$.

When $n=2^{a}q$ with $a\geq 2$ and $q$ odd, the cotangent summands correspond to divisors of the form
$\sigma =2^{a}f$ with $f\backslash q$, because $n/\sigma$ must be odd. But then by Lemma \ref{L:Signo}
\[
 \sign(C_{\sigma}) = (-1)^{2^{a-1}f+1} = -1.
\]
and $\epsilon_{n}(\sigma) = 1$ because $\sigma/2$ is even.

The possible positive contributions then come from the cases $\sigma = 2^{e}f$ with $1\leq e < a$, $f\backslash q$.
Put
\[
 \mathcal{A}(q) = \sum_{f\backslash q} N_{q}(f)\frac{q}{f},
\]
which is positive. For the factors related to the prime $2$, note first that $n/\sigma$ is even, thus the contribution of $J_{n,\sigma}$ is a $-1$ factor. We first examine $e=1$, which contributes the positive term (since $\epsilon_{n}(\sigma)=-1$)
\[
N_{2^{a}}(2)\frac{2^{a}}{2^{1}}=2^{a-2}2^{a-1}=2^{2a-3};
\]
we follow with $2\leq e < a$, which contribute the negative terms (since $\epsilon_{n}(\sigma)=1$),
\[
N_{2^{a}}(2^{e})\frac{2^{a}}{2^{e}}=2^{a-e-1}2^{a-e} = 2^{2a-2e-1}.
\]

Thus the total contribution is
\begin{align*}
E_{n} &= \left(2^{2a-3}-\sum_{e=2}^{a-1}2^{2a-2e-1}\right)\mathcal{A}(q)\\
&=\left(2^{2a-3}-\sum_{e=0}^{a-3}2^{2a-2e-5}\right)\mathcal{A}(q)\\
&=\frac{2^{2a-2}+2}{2^{2}-1}\mathcal{A}(q)=\frac{2^{2a-2}+2}{3}\mathcal{A}(q)>0,
\end{align*}
and it is bounded above as follows: first $\sum_{f\backslash q} N_{q}(f)=\phi(q)$, and thus
\[
\mathcal{A}(q)\leq q \sum_{f\backslash q} N_{q}(f) = q \phi(q) \leq q^{2}.
\]

Since
\[
 \frac{2^{2a-2}+2}{3}\leq 2^{2a-2}
\]
for $a\geq 2$, then
\[
E_{n} \leq 2^{2a-2}q^{2} = \frac{(2^{a}q)^{2}}{2^{2}} = \frac{n^{2}}{4}.
\]

In order to bound $|C_{n}|$ from below for $n=2^{a}q\geq 8$, we first define
\[
a_{r} = \left(\cot\frac{r\pi}{n}\right)^{n}
\]
for $1\leq r\leq n/2-1$. Since $n/2$ is even, we can write
\[
|C_{n}|=2\sum_{r=1}^{n/2-1}(-1)^{r+1} a_{r} \geq 2(a_{1}-a_{2}).
\]

By the double angle formula for tangents, we have for $0<x<\pi/4$ that
\[
\frac{\cot 2x }{\cot x } = \frac{1-(\tan(x))^{2}}{2}< \frac{1}{2}
\]
and thus
\[
\frac{a_{2}}{a_{1}} = \frac{\left(\cot\frac{2\pi}{n}\right)^{n}}{\left(\cot\frac{\pi}{n}\right)^{n}}<\left(\frac{1}{2}\right)^{n}.
\]

Since the cotangent is decreasing, then for $n\geq 8$
\[
\cot\frac{\pi}{n} \geq \cot \frac{\pi}{8} = 1+\sqrt{2} >2
\]
which means that $a_{1}=(\cot\frac{\pi}{n})^{n}>2^{n}$. Hence,
\[
 |C_{n}| \geq 2(a_{1}-a_{2}) \geq 2\left(a_{1}-\frac{a_{1}}{2^{n}}\right) > 2\cdot 2^{n}\left(1-\frac{1}{2^{n}}\right)
 =2^{n+1}-2> \frac{n^{2}}{4}.
\]

Given that the absolute value of $C_{n}$ is larger than the upper bound on the largest positive contribution, and
other cotangent terms are also negative, it follows that $S_{n}<0$ for $n=2^{a}q$ with $a\geq 2$ and $q$ odd.
\end{proof}

\section{Comparison with previously computed values}\label{Sec:Computations}

Formula \eqref{eq:final} reproduces the values computed in
\cite{Agustin2023}.  We use the modulus $n$ as the first column, as in
the displayed numerical table of that paper.
\begin{table}[ht!]
\caption{The values of $S_{n}$ for selected even indices $n$.}
\begin{tabular}{c|r}
\hline
 $n$&$S_n$\\
\hline
$6$&$1456/27$\\
$8$&$-2300$\\
$10$&$762256/5$\\
$12$&$-10643506432/729$\\
$14$&$13444304416/7$\\
$16$&$-332995177452$\\
$18$&$1450048309488389824/19683$\\
$20$&$-63332915694394741504/3125$\\
$22$&$74674004155100687824/11$\\
$24$&$-1446060869245578144968989936/531441$\\
\hline
\end{tabular}
\end{table}

For example, when $n=8$, the contributions of the even divisors are
\[
 \sigma=2:8,\qquad \sigma=4:-2,\qquad \sigma=8:-2306,
\]
and hence
\[
 S_8=8-2-2306=-2300.
\]

The large size of the numbers is a consequence of the fact that the dominant
term is often the cotangent block with $\sigma=n$, while proper even divisors
supply arithmetic correction terms.

We now illustrate the formula for $n=20,22,24$. For $n=20$ we have the
even values $\sigma = 2,4,10,20$, and the corresponding signs are
\begin{align*}
\epsilon_{20}(2)&=(-1)^{\frac{2\cdot(10-1)}{2}}=-1,\\
\epsilon_{20}(4)&=(-1)^{\frac{4\cdot(10-1)}{2}}=1,\\
\epsilon_{20}(10)&=(-1)^{\frac{10\cdot(10-1)}{2}}=-1,\\
\epsilon_{20}(20)&=(-1)^{\frac{20\cdot(10-1)}{2}}=1
\end{align*}
with multiplicities
\begin{align*}
N_{20}(2) &= N_{2^{2}}(2)N_{5}(1) = 2^{2-1-1}5^{0}(5-2) = 3,\\
N_{20}(4) &= N_{2^{2}}(4)N_{5}(1) = 1\cdot 5^{0}(5-2) = 3,\\
N_{20}(10) &= N_{2^{2}}(2)N_{5}(5) = 2^{2-1-1}\cdot 1 = 1,\\
N_{20}(20) &= N_{2^{2}}(4)N_{5}(5) = 1\cdot 1 = 1.
\end{align*}

Since $20/2 = 10$, $20/4 = 5$, $20/10 = 2$ and $20/20=1$, only $\sigma = 4,20$ contribute
a cotangent sum. We have
\begin{align*}
C_{4} &= (-1)^{2}(2P_{4}(2)-P_{4}(4)) = -2,\\
C_{20} & = (-1)^{10}(2P_{20}(10)-P_{20}(20)) = -\frac{63332915694394747754}{3125}.
\end{align*}

Thus
\begin{align*}
S_{20} &= (-1)(3)(10)(-1)+(1)(3)(5)(-2)+(-1)(1)(2)(-1)\\
&\phantom{=}+(1)(1)(1)(-\tfrac{63332915694394747754}{3125})\\
&=-\frac{63332915694394741504}{3125}.
\end{align*}

For $n=22$ the computation is particularly simple since the only nonzero term corresponds to $\sigma = 22$, and thus
\[
S_{22}=(-1)^{11}(2P_{22}(11)-P_{22}(22))= \frac{74674004155100687824}{11}.
\]

For $n=24$ we have the even values $\sigma = 2,4,8,6,12,24$, and the corresponding signs are
\begin{align*}
\epsilon_{24}(2)&=(-1)^{\frac{2\cdot(12-1)}{2}}=-1,\\
\epsilon_{24}(4)&=(-1)^{\frac{4\cdot(12-1)}{2}}=1,\\
\epsilon_{24}(6)&=(-1)^{\frac{6\cdot(12-1)}{2}}=-1,\\
\epsilon_{24}(8)&=(-1)^{\frac{8\cdot(12-1)}{2}}=1,\\
\epsilon_{24}(12)&=(-1)^{\frac{12\cdot(12-1)}{2}}=1,\\
\epsilon_{24}(24)&=(-1)^{\frac{24\cdot(12-1)}{2}}=1
\end{align*}
with multiplicities
\begin{align*}
N_{24}(2) &= N_{2^{3}}(2)N_{3}(1) = 2^{3-1-1}3^{0}(3-2) = 2,\\
N_{24}(4) &= N_{2^{3}}(4)N_{3}(1) = 2^{3-2-1}\cdot 3^{0}(3-2) = 1,\\
N_{24}(6) &= N_{2^{3}}(2)N_{3}(3) = 2^{3-1-1}\cdot 1 = 2,\\
N_{24}(8) &= N_{2^{3}}(8)N_{3}(1) = 1\cdot 3^{0}(3-2) = 1,\\
N_{24}(12) &= N_{2^{3}}(4)N_{3}(3) = 2^{3-2-1}\cdot 1 = 1,\\
N_{24}(24) &= N_{2^{3}}(8)N_{3}(3) = 1\cdot 1 = 1.
\end{align*}

Since $24/2 = 12$, $24/4 = 6$, $24/6 = 4$, $24/8=3$,  $24/12=2$ and $24/24=1$, only $\sigma = 8,24$ contribute
a cotangent sum. We have
\begin{align*}
C_{8} &= (-1)^{4}(2P_{8}(4)-P_{8}(8)) = -2306,\\
C_{24} & = (-1)^{12}(2P_{24}(12)-P_{24}(24)) = -\frac{1446060869245578141305235682}{531441}.
\end{align*}

Thus
\begin{align*}
S_{24} &= (-1)(2)(12)(-1)+(1)(1)(6)(-1)+(-1)(2)(4)(-1)\\
&\phantom{=}+(1)(1)(3)(-2306)+(1)(1)(2)(-1)+(1)(1)(1)(-\tfrac{1446060869245578141305235682}{531441})\\
&=-\frac{1446060869245578144968989936}{531441}.
\end{align*}

\section{Final remarks}\label{Sec:Final}

The original affine problem has two distinct layers.  The first is
finite-group arithmetic: the fixed-point equation, the distribution of
$\gcd(1-v,n)$, and the signs of affine permutations.  The second consists of
finite trigonometric power sums.  Theorem~\ref{thm:reduction} separates
these layers completely.

This separation also clarifies the role of the recent work
\cite{LiuXin}. What is specific to the
present problem is that the full signed sum over affine derangements
collapses to a divisor sum with the local factors $N_n(\sigma)$ and
$\epsilon_n(\sigma)$, and that the required cotangent sums for it can
be carried out with rational arithmetic only. Thus the direct and limited calculation of
\cite{Agustin2023} is replaced by a general arithmetic formula.

It may be worthwhile to study whether similar simplifications occur for
Wang--Sun type sums over other structured permutation groups, where the
difference map $x\mapsto x-\pi(x)$ has controlled fibres.

\section*{Declaration of generative AI and AI-assisted technologies in the manuscript preparation process}

During the preparation of this paper, the author used ChatGPT 5.6 Sol to investigate the open problem concerning the full affine Wang–Sun sum posed in his earlier work. The model proposed an initial formula and its evaluation through a specialization of Liu–Xin’s result. The form of the formula led the author to investigate the prime-denominator and alternating-sign patterns discussed in Section \ref{Sec:Patterns}. In response to the author’s questions, the model provided sketches of the corresponding arguments. The model also suggested a rational computation upon prompting and assisted in locating relevant literature. The author subsequently studied the sources, made the rational computation explicit through the Newton–Girard recurrence, and checked the resulting numerical evaluations with Maxima. The result by Lerch concerning the sign of multiplication by a unit was reported from third-hand sources, and Lerch's original formulation gave a cleaner statement. After independently investigating the connection with Menon-type identities (via \cite{Toth}), the author reconstructed the local-multiplicity argument using Sita Ramaiah's result. The author substantially rewrote and checked the proofs, particularly the prime-denominator and alternating-sign arguments, and wrote the final manuscript. The author takes full responsibility for its mathematical claims, computations, references, and exposition.


\begin{thebibliography}{99}

\bibitem{Agustin2023}
O.~A. Agust\'in-Aquino,
\emph{Wang--Sun formula in $\overrightarrow{\mathrm{GL}}(\mathbb Z/2k\mathbb Z)$},
Integers \textbf{23} (2023), Paper A37.

\bibitem{CK00}
D.~Cvijovi\'{c} and J. Klinowski,
\emph{Finite Cotangent Sums and the Riemann Zeta Function},
Math. Slovaca \textbf{50} (2000), No. 2, 149--157.

\bibitem{GKP}
R.~L. Graham, D.~E. Knuth, and O.~Patashnik,
\emph{Concrete Mathematics: A Foundation for Computer Science},
2nd ed.,
Addison--Wesley, Reading, MA, 1994.

\bibitem{Lerch1896}
M.~Lerch,
\emph{Sur un th\'eor\`eme arithm\'etique de Zolotarev},
Bulletin int. de l'Ac. Prague \textbf{3} (1896), 34--37.

\bibitem{LiuXin}
J.~Liu and G.~Xin,
\emph{Root-of-unity weighted trigonometric power sums: a constant term approach},
arXiv:2607.29130 [math.CO], 2026.

\bibitem{Macdonald}
I.~G. Macdonald,
\emph{Symmetric Functions and Hall Polynomials},
2nd ed.,
Oxford University Press, 1995.

\bibitem{Ramaia}
V.~Sita Ramaiah,
\emph{Arithmetical sums in regular convolutions},
Journal für die reine und angewandte Mathematik \textbf{303/304} (1978), 265--283.

\bibitem{Seroul}
R.~S\'eroul,
\emph{Programming for Mathematicians},
translated from the French by D.~O'Shea,
Universitext, Springer-Verlag, Berlin, 2000.

\bibitem{Toth}
L.~Tóth,
\emph{Proofs, generalizations and analogs of Menon's identity: a survey},
Acta Univ. Sapientiae, Mathematica \textbf{15}, No. 1, (2023), 142--197.

\bibitem{WangSun}
H.~Wang and Z.-W.~Sun,
\emph{Proof of a conjecture involving derangements and roots of unity},
Electron. J. Combin. \textbf{30} (2023), No.~2, Paper P2.1.

\end{thebibliography}
\end{document}